\title{\textbf{On strongly Gauduchon metrics of compact complex manifolds}}
\author{Jian Xiao}
\date{}
\begin{document}
\maketitle

\theoremstyle{definition}
\newtheorem*{pf}{Proof}
\newtheorem{theorem}{Theorem}[section]
\newtheorem{remark}{Remark}[section]
\newtheorem{problem}{Problem}[section]
\newtheorem{conjecture}{Conjecture}[section]
\newtheorem{lemma}{Lemma}[section]
\newtheorem{corollary}{Corollary}[section]
\newtheorem{definition}{Definition}[section]
\newtheorem{proposition}{Proposition}[section]

\begin{abstract}
In this paper, we study strongly Gauduchon metrics on compact complex manifolds. We study the cohomology cones $\mathcal{SG}$ in the de Rham cohomology groups generated by all strongly Gauduchon metrics and its direct images under proper modifications. We also study the moduli of strongly Gauduchon manifolds. We prove an existence result of strongly Gauduchon metrics on compact complex manifold which is fibred over a compact complex curve. In particular, if a compact complex manifold $\mathcal{X}$ has a topologically essential fibration over a compact complex curve, and if the generic fibres satisfy $\partial\bar\partial$-lemma, then $\mathcal{X}$ admits strongly Gauduchon metrics.
\end{abstract}
\section{Introduction}
Let $X$ be an $n$-dimensional compact complex manifold with a hermitian metric
$\omega$, then Gauduchon \cite{Gau77} proved that there always exists a smooth function $u$ such that $\partial \bar \partial e^{u}\omega^{n-1}=0$. We usually call a hermitian metric $\omega$ a Gauduchon metric if $\partial \bar \partial \omega^{n-1}=0$, i.e., $\partial\omega^{n-1}$ is $ \bar \partial $-closed.
Thus \cite{Gau77} established a general existence theorem of Gauduchon metrics on compact complex manifolds. Indeed, besides the elliptic PDE method in \cite{Gau77}, the existence of Gauduchon metrics is also equivalent to any plurisubharmonic function on $X$ is constant. And this is always true by the compactness of $X$ and property of plurisubharmonic fuctions. Now let's introduce the definition of strongly Gauduchon metrics which is a stronger metric notation.\\

\begin{definition}
Let $X$ be an $n$-dimensional compact complex manifold with a hermitian metric
$\omega$, if $\partial \omega^{n-1}$ is $\bar \partial$-exact, then we call $\omega$ a strongly Gauduchon metric.
\end{definition}
For convenience, we will also sometimes denote "strongly Gauduchon" briefly by SG in the
sequel. Note that any
compact complex manifold satisfying $\partial \bar \partial$-lemma admits a SG metric, since if $\omega$ is Gauduchon, then $d(\partial\omega^{n-1})=0$, and therefore $\partial \bar \partial$-lemma implies $\partial\omega^{n-1}=\bar \partial\beta$ for some smooth form $\beta$. From this point of view, we see SG is a weak metric restriction. However, for compact complex surfaces, the SG condition is equivalent to the K$\ddot{a}$hler condition, thus also equivalent to $\partial \bar \partial$-lemma. This result is essentially contained in \cite{Lam99}, and its proof relies on the Hodge theory of compact complex surfaces, especially the signature theorem.
\begin{proposition}
(\cite{Lam99}) Compact complex surface admitting a strongly Gauduchon metric is a K$\ddot{a}$hler surface.
\end{proposition}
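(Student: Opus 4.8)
The strategy is to build, from the strongly Gauduchon metric, a de Rham class on $X$ of a very rigid type, and then use the signature theorem to rule out the existence of such a class unless $b_1(X)$ is even; once $b_1(X)$ is even, the classical criterion that a compact complex surface is Kähler if and only if its first Betti number is even (a deep result; direct proofs are due to Buchdahl and Lamari) finishes the proof.

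First I would record the algebra special to the surface case. Since $n:=\dim_{\mathbb C}X=2$ we have $\omega^{n-1}=\omega$, so the $SG$ hypothesis reads $\partial\omega=\bar\partial\beta$ for some smooth $(2,0)$-form $\beta$. Replacing $\beta$ by $\beta-\beta_0$ for a suitable holomorphic $2$-form $\beta_0$ — which is possible because $(\beta_1,\beta_2)\mapsto\int_X\beta_1\wedge\bar\beta_2$ is positive definite on $H^0(X,\Omega^2_X)$ — I may and do normalize
\[
\int_X\beta\wedge\bar\sigma=0\qquad\text{for every holomorphic }2\text{-form }\sigma .
\]
Because $X$ carries no forms of type $(3,0)$, $(0,3)$ or $(4,0)$, one gets $\partial\beta=\bar\partial\bar\beta=0$ and $\bar\partial\omega=\overline{\partial\omega}=\partial\bar\beta$, so that $\Omega:=\omega-\beta-\bar\beta$ is a real $d$-closed $2$-form whose $(1,1)$-part is the positive form $\omega$; moreover $\int_X\Omega\wedge\Omega=\int_X\omega^2+2\int_X\beta\wedge\bar\beta\ge\int_X\omega^2>0$, since $\beta\wedge\bar\beta$ is a nonnegative multiple of the volume form. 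Thus $[\Omega]^2>0$.

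Now suppose, for contradiction, that $b_1(X)$ is odd. By the Hirzebruch signature theorem together with Noether's formula (and the Hodge-theoretic fact that $h^{0,1}=h^{1,0}+1$ in this case), the cup-product form on $H^2(X,\mathbb R)$ has exactly $b^+(X)=2p_g(X)$ positive squares, and a maximal positive subspace is provided by $W:=\mathrm{Re}\,H^{2,0}\oplus\mathrm{Im}\,H^{2,0}$, the real span inside $H^2(X,\mathbb R)$ of $\{\mathrm{Re}[\sigma],\mathrm{Im}[\sigma]:\sigma\in H^0(X,\Omega^2_X)\}$, which has real dimension $2p_g$ and on which the cup product is positive definite (because $\sigma\mapsto\int_X\sigma\wedge\bar\sigma$ is positive). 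I claim that $[\Omega]$ enlarges this positive subspace, which is the required contradiction. Orthogonality $[\Omega]\perp W$ is immediate: for a holomorphic $2$-form $\sigma$ every term of $\int_X\Omega\wedge\sigma$ vanishes by bidegree except $-\int_X\bar\beta\wedge\sigma=-\overline{\int_X\beta\wedge\bar\sigma}=0$ by the normalization of $\beta$, and likewise $\int_X\Omega\wedge\bar\sigma=0$. It remains to see $[\Omega]\notin W$. If $[\Omega]\in W$, write $\Omega=\sigma+\bar\sigma+d\eta$ with $\sigma$ holomorphic and $\eta$ a real $1$-form; comparing $(1,1)$- and $(2,0)$-parts yields $\omega=\bar\partial\eta^{1,0}+\partial\eta^{0,1}$ and $\partial\eta^{1,0}=-\beta-\sigma$, and an integration by parts (using $\bar\partial\omega=\partial\bar\beta$, the vanishing on a surface of $\int_X$ of $\partial$ of a $(2,1)$-form and of $\bar\partial$ of a $(1,2)$-form, and again $\int_X\beta\wedge\bar\sigma=0$) forces $\int_X\omega^2=-2\int_X\beta\wedge\bar\beta\le0$, contradicting $\int_X\omega^2>0$. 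Hence the cup product is positive definite on $\mathbb R[\Omega]\oplus W$, a subspace of dimension $2p_g+1$, contradicting $b^+(X)=2p_g$. Therefore $b_1(X)$ is even and $X$ is Kähler.

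The main obstacle is the last point, namely ruling out $[\Omega]\in W$: this is exactly where one genuinely needs both the positivity of the $(1,1)$-component $\omega$ of $\Omega$ and the Hodge-orthogonal normalization of the potential $\beta$ — without the normalization the cross terms between $[\Omega]$ and $W$, as well as the $\sigma$-terms in the integration by parts, do not vanish, and without positivity of $\omega$ there is nothing to contradict. Everything else is bookkeeping with bidegrees on a surface, together with the two black boxes invoked: the signature theorem in the form $b^+(X)=2p_g(X)$ when $b_1(X)$ is odd, and the $b_1$-parity criterion for Kähler surfaces.
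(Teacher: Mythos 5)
Your argument is correct modulo the two standard facts you explicitly flag as black boxes (the signature computation $b^{+}=2p_{g}$ when $b_{1}$ is odd, and the Buchdahl--Lamari criterion that a compact surface with $b_{1}$ even is K$\ddot{a}$hler), and it follows exactly the route the paper indicates: the paper gives no proof of its own, citing \cite{Lam99} and remarking only that the argument rests on the Hodge theory of compact surfaces, especially the signature theorem. One small simplification: your separate integration-by-parts step ruling out $[\Omega]\in W$ is redundant, since $[\Omega]\perp W$ together with $[\Omega]^{2}\geq\int_{X}\omega^{2}>0$ already makes $[\Omega]\in W$ impossible, so $\mathbb{R}[\Omega]\oplus W$ is positive definite of dimension $2p_{g}+1$ without that computation.
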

By its very definition, it's easy to see that the existence of SG metrics is equivalent to the existence of a $d$-closed real $(2n-2)$-form $\Omega$ with its $(n-1,n-1)$-component $\Omega^{n-1,n-1}$ strictly positive. Indeed, if $\partial\omega^{n-1}=\bar \partial\beta$, then $$\Omega:=\omega^{n-1}-\beta-\bar\beta$$
is our desired form. On the other hand, if $\Omega$ is a $d$-closed real $(2n-2)$-form with $\Omega^{n-1,n-1}>0$, then the $(n-1)$-th root $\omega$ (\cite{Mich82}, page 280) of $\Omega^{n-1,n-1}$ is a SG metric. For $d\Omega=0, \Omega=\bar\Omega$ implies $\partial\omega^{n-1}=-\bar\partial\Omega^{n,n-2}$. In the sequel, we will always call such a real $(2n-2)$-form $\Omega$ a strongly Gauduchon metric.

It's obvious that a balanced manifold, i.e., complex manifold admitting a hemitian metric $\omega$ with $d\omega^{n-1}=0$ must be a SG manifold. And we will see that these two kind of  metrics are very similar. And this is one reason why we study strongly Gauduchon metrics. Fisrtly, like the balanced case \cite{Mich82}, we can also give an intrinsic characterization of SG manifold by using Hahn-Banach theorem, see \cite{Pop09}.
\begin{proposition}
(\cite{Pop09}) Let $X$ be a compact complex manifold, then $X$ is a strongly Gauduchon manifold if and only if all $d$-exact positive $(1,1)$-currents vanish on $X$.
\end{proposition}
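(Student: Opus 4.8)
The plan is to prove the two implications separately: the forward one by a one-line pairing computation, and the converse by a Hahn--Banach separation argument carried out in the space of smooth $(n-1,n-1)$-forms, in complete analogy with Michelsohn's characterization of balanced manifolds. Throughout I would use the description recalled just above: $X$ is strongly Gauduchon if and only if there is a $d$-closed real $(2n-2)$-form $\Omega$ whose $(n-1,n-1)$-component $\Omega^{n-1,n-1}$ is strictly positive, equivalently (via Michelsohn's $(n-1)$-st root) equal to $\omega^{n-1}$ for a hermitian metric $\omega$.

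For the easy direction, suppose $X$ carries a strongly Gauduchon metric $\Omega$ and let $T=dS$ be a $d$-exact positive $(1,1)$-current. Since $d\Omega=0$, we get $\langle T,\Omega\rangle=\pm\langle S,d\Omega\rangle=0$; on the other hand, pairing a $(1,1)$-current with a real $(2n-2)$-form only sees the complementary bidegree, so $\langle T,\Omega\rangle=\langle T,\Omega^{n-1,n-1}\rangle$ is the mass of $T$ measured against the strictly positive form $\Omega^{n-1,n-1}$. For a positive current this is $\geq 0$, with equality only when $T=0$; hence $T=0$.

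For the converse, I would assume that all $d$-exact positive $(1,1)$-currents vanish, and let $\mathcal P\subset\mathcal E^{n-1,n-1}_{\mathbb R}$ be the (closed, convex) cone of positive real $(n-1,n-1)$-forms; it has nonempty interior, which consists exactly of the $(n-1)$-st powers of hermitian metrics. Let $L\subset\mathcal E^{n-1,n-1}_{\mathbb R}$ be the linear subspace of $(n-1,n-1)$-components of $d$-closed real $(2n-2)$-forms, so that the existence of a strongly Gauduchon metric is precisely the statement $L\cap\mathrm{int}\,\mathcal P\neq\emptyset$. Assuming this fails, the geometric Hahn--Banach theorem yields a nonzero continuous linear functional $\Lambda$ on $\mathcal E^{n-1,n-1}_{\mathbb R}$ separating the subspace $L$ from the open convex set $\mathrm{int}\,\mathcal P$; since $L$ is a subspace and $0\in\mathcal P$, one obtains $\Lambda\equiv 0$ on $L$ and $\Lambda\geq 0$ on $\mathcal P$. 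By duality $\Lambda$ is represented by a nonzero real $(1,1)$-current $T$, and $\Lambda\geq 0$ on $\mathcal P$ says exactly that $T$ is a positive current (weak and strong positivity agreeing in bidegree $(1,1)$).

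The final step is to show $T$ is $d$-exact, which contradicts the hypothesis. For any smooth $(2n-3)$-form $\gamma$ the form $d\gamma$ is $d$-closed, hence its $(n-1,n-1)$-component lies in $L$ and $\langle T,d\gamma\rangle=0$; thus $dT=0$ and $T$ defines a class $[T]\in H^2_{dR}(X,\mathbb R)$ (currents compute de Rham cohomology). For any $d$-closed real $(2n-2)$-form $\alpha$ one has $\alpha^{n-1,n-1}\in L$, so $\langle T,\alpha\rangle=\langle T,\alpha^{n-1,n-1}\rangle=0$; that is, $[T]$ annihilates all of $H^{2n-2}_{dR}(X,\mathbb R)$, and since $X$ is compact and oriented, Poincar\'e duality forces $[T]=0$, i.e.\ $T$ is $d$-exact. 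Then $T$ is a nonzero $d$-exact positive $(1,1)$-current, contradicting the assumption, so $L\cap\mathrm{int}\,\mathcal P\neq\emptyset$ and $X$ is strongly Gauduchon. I expect the main obstacle to be the functional-analytic bookkeeping: checking that $\mathcal P$ is closed with nonempty interior in the $C^\infty$ topology, that the separating functional is continuous (hence genuinely a current), and carrying out cleanly the passage from "$T$ annihilates all closed $(2n-2)$-forms" to "$T$ is $d$-exact" via Poincar\'e duality between $H^2$ and $H^{2n-2}$.
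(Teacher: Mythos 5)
Your proposal is correct, but note that the paper never proves this proposition at all: it is quoted from \cite{Pop09}, and the closest argument the paper actually carries out is the Hahn--Banach duality in Proposition \ref{sgdual}. It is worth comparing the two, because they run the duality in opposite directions. In Proposition \ref{sgdual} the separation takes place in the Fr\'echet space of real degree-two \emph{currents}: a closed subspace $S_{1}$ (containing the exact currents) is separated from the compact convex set $S_{2}$ of mass-normalized positive currents, and the separating functional is a smooth real $(2n-2)$-form, which is then shown to be an SG metric; the functional-analytic burden there is the weak compactness of mass-bounded positive currents. You instead work Sullivan/Michelsohn style in the space of smooth real $(n-1,n-1)$-\emph{forms}, separating the subspace $L$ of $(n-1,n-1)$-components of closed real $(2n-2)$-forms from the open cone of strictly positive forms; the separating functional is then a $(1,1)$-current $T$, with $\Lambda\geq 0$ on $\mathcal{P}$ giving positivity (using that weak and strong positivity coincide in bidegree $(1,1)$/$(n-1,n-1)$), $\Lambda|_{L}=0$ giving both $dT=0$ and $[T]\cdot H^{2n-2}(X,\mathbb{R})=0$, and Poincar\'e duality plus the fact that currents compute de Rham cohomology giving $d$-exactness of $T$ --- a contradiction. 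This is essentially Popovici's original proof, and the points you flag as needing care (openness of the strictly positive cone in the $C^{\infty}$ topology on a compact manifold, continuity of the separating functional, the passage from annihilating all closed $(2n-2)$-forms to $d$-exactness) are exactly the right ones and all go through; your easy direction, Stokes plus the bound $\Omega^{n-1,n-1}\geq\varepsilon\,\omega^{n-1}$ on a compact manifold, is also fine. The trade-off is that your route needs Michelsohn's $(n-1)$-st root only to interpret the interior of $\mathcal{P}$, while the paper's dual version produces the SG metric directly as the separating form and is what generalizes to the cone-duality statement $\mathcal{E}_{d}^{\vee}=\overline{\mathcal{SG}}$.
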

Strongly Gauduchon metrics were firstly introduced in \cite{Pop09}, Popovici used such metrics to study the holomorphic deformation limits of projective manifolds. Let $\pi: X\rightarrow \bigtriangleup$ be a family of compact complex manifolds over a unit disk, Popovici proved that if the fibre $X_{t}$ is projective for $t\neq 0$ and the center fibre $X_{0}$ is a SG manifold, then $X_{0}$ must be a Moishezon manifold. \cite{Pop09} also announced a proof of rigidity of SG manifolds, i.e., let $\pi: X\rightarrow \bigtriangleup$ be a family of compact complex manifolds with $X_{t}$ satisfies $\partial \bar \partial$-lemma for $t\neq 0$, then the center fibre $X_{0}$ must be a SG manifold too. Thus SG metrics are useful in the study of deformation limits of projective manifolds.\\

It's well known that balanced manifolds are stable under bimeromorphic maps \cite{AB95}. A holomorphic map $\mu: \widetilde{X}\rightarrow X$ is called a proper modification if $\mu: \widetilde{X}\backslash E\rightarrow X\backslash Y$ is a biholomorphism and $Y$ is a subvariety of $X$ with codimension$(Y)\geq 2$. Assume $\mu: \widetilde{X}\rightarrow X$ is a proper modification, then $\widetilde{X}$ is balanced if and only if $X$ is balanced. Paralleling the main result of \cite{AB95}, \cite{Pop10} proved a similar result for the SG case.
\begin{theorem}
(\cite{Pop10}) Let $\mu: \widetilde{X}\rightarrow X$ be a proper modification, then $\widetilde{X}$ is strongly Gauduchon if and only if $X$ is strongly Gauduchon.
\end{theorem}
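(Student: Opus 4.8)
The plan is to prove both implications through the intrinsic characterisation of Proposition 1.2: a compact complex manifold is strongly Gauduchon exactly when every $d$-exact positive $(1,1)$-current on it vanishes. In each direction I would argue by contradiction, assuming a nonzero $d$-exact positive $(1,1)$-current on one side, transporting it across $\mu$, and contradicting Proposition 1.2 on the other side. I would rely on three standard facts about currents: the pullback of a closed positive $(1,1)$-current under the holomorphic map $\mu$ is well defined (via local plurisubharmonic potentials) and compatible with $\mu^{*}$ on de Rham cohomology; the pushforward of a positive $(p,p)$-current under the proper holomorphic map $\mu$ between manifolds of equal dimension is again a positive $(p,p)$-current and commutes with $d$; and the support theorems for closed positive currents (a closed positive current supported on an analytic set of too small dimension vanishes, and a closed positive $(1,1)$-current supported on an analytic set is a nonnegative combination of integration currents over its hypersurface components).

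Suppose first that $\widetilde X$ is strongly Gauduchon and $X$ is not, so there is a nonzero $d$-exact positive $(1,1)$-current $T$ on $X$. If $\operatorname{supp}T\subseteq Y$, then $T$ is a closed positive current of bidimension $(n-1,n-1)$ supported on an analytic set of dimension $\le n-2$, hence $T=0$ by the support theorem, a contradiction; so $\operatorname{supp}T\not\subseteq Y$. Then $\mu^{*}T$ is a positive $(1,1)$-current on $\widetilde X$; it is $d$-exact because its de Rham class is $\mu^{*}\{T\}=0$; and it is nonzero because on $\widetilde X\setminus E\cong X\setminus Y$ it coincides with $T$, which does not vanish there. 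This contradicts Proposition 1.2 for $\widetilde X$.

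Suppose now that $X$ is strongly Gauduchon and $\widetilde X$ is not, so there is a nonzero $d$-exact positive $(1,1)$-current $\widetilde T$ on $\widetilde X$. Then $\mu_{*}\widetilde T$ is a positive $(1,1)$-current on $X$ with de Rham class $\mu_{*}\{\widetilde T\}=0$, hence $d$-exact. If $\mu_{*}\widetilde T\neq 0$ this contradicts Proposition 1.2 for $X$; so $\mu_{*}\widetilde T=0$, and since $\mu$ restricts to a biholomorphism over $X\setminus Y$ this forces $\operatorname{supp}\widetilde T\subseteq E$. By the support theorem for closed positive $(1,1)$-currents with support in an analytic set, $\widetilde T=\sum_{i}c_{i}[E_{i}]$ with $c_{i}\ge 0$ not all zero, the sum over the irreducible hypersurface components $E_{i}$ of the exceptional set $E$; consequently $\sum_{i}c_{i}\{E_{i}\}=\{\widetilde T\}=0$ in $H^{2}(\widetilde X,\mathbb{R})$.

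The main obstacle is to rule out such a relation, i.e. to show that the exceptional hypersurface classes $\{E_{i}\}$ admit no nontrivial relation with nonnegative coefficients; in fact I expect them to be $\mathbb{R}$-linearly independent. I would reduce this to a tower of blow-ups with smooth centres: by Hironaka there is a modification $\sigma\colon\widehat X\to\widetilde X$ with $\mu\circ\sigma\colon\widehat X\to X$ a finite composition of blow-ups along smooth centres. Pulling the relation back by $\sigma^{*}$, and noting that the strict transform $\widehat E_{i}$ of $E_{i}$ is itself a $(\mu\sigma)$-exceptional hypersurface occurring with coefficient $1$ in the total transform $\sigma^{*}E_{i}$ and with coefficient $0$ in $\sigma^{*}E_{j}$ for $j\neq i$ (since every $\sigma$-exceptional divisor has image of dimension $\le n-2$, while $\sigma(\widehat E_i)=E_i$), the coefficient of $\{\widehat E_{i}\}$ in $\sum_i c_i\sigma^{*}\{E_i\}$ equals $c_i$. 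Thus one is reduced to the linear independence in $H^{2}(\widehat X,\mathbb{R})$ of the exceptional divisor classes of a smooth blow-up tower, which is classical: it follows by iterating the blow-up (projective bundle) formula in cohomology — each smooth blow-up adds one new degree-two class, independent of the pullback of $H^{2}$ of the base — equivalently, from the fact that an exceptional divisor of a smooth blow-up is negative along the fibres it contracts. Hence all $c_{i}=0$, the desired contradiction, and the theorem follows. The current-theoretic reductions preceding this step are routine once Proposition 1.2 is in hand; the real work is this transfer of the cohomological relation to a setting with intersection-theoretic control of the exceptional divisors.
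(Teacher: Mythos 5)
The paper does not actually prove this statement: Theorem 1.1 is quoted from \cite{Pop10}, so there is no internal proof to compare against; the closest internal material is the proof of Theorem \ref{push}, which re-uses the same circle of ideas for the cone statement. Your proposal follows what is essentially the known (Popovici-style) route: both directions are run through the characterization of Proposition 1.2, with the pullback of a $d$-exact positive $(1,1)$-current plus the support theorem handling one direction, and the pushforward plus the support theorem reducing the other to showing that no nonzero combination $\sum_i c_i[E_i]$, $c_i\geq 0$, of exceptional prime divisors can have zero de Rham class. Two points in your sketch should be made explicit, though neither is a genuine gap: (i) the claim that $\mu^{*}T$ is $d$-exact because its class is $\mu^{*}\{T\}=0$ needs the compatibility $\{\mu^{*}T\}=\mu^{*}\{T\}$ for the potential-theoretic pullback, which one gets by writing $T=\theta+dd^{c}\varphi$ with $\theta$ a smooth closed form (Bott--Chern classes of closed $(1,1)$-currents admit smooth representatives) and setting $\mu^{*}T=\mu^{*}\theta+dd^{c}(\varphi\circ\mu)$; and (ii) after pulling the relation back by $\sigma$, the classes that occur are those of the strict transforms $\widehat{E}_{i}$ and of the $\sigma$-exceptional prime divisors, i.e.\ of strict transforms of the stage-by-stage exceptional divisors $D_{k}$ of the tower, whereas the iterated blow-up formula gives linear independence of the \emph{total} transforms of the $D_{k}$ together with the pullback of $H^{2}(X,\mathbb{R})$; one passes from total to strict transforms by the unitriangular relation (the total transform of $D_{k}$ equals its strict transform plus a nonnegative combination of strict transforms of later exceptional divisors), which preserves linear independence. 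Finally, the whole reduction rests on Hironaka's theorem that a proper modification of compact complex manifolds can be dominated by a finite tower of blow-ups with smooth centres; this is a genuine (though standard) input and should be cited. With these details filled in, your argument is correct and consistent with the approach of \cite{Pop10} and with the techniques the paper itself uses in Section 3.
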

From the definition of SG manifolds, we can define a cohomology cone $\mathcal{SG} $ generated by all SG metrics, which is analogous to the K$\ddot{a}$hler cones or balanced cones (\cite{FX12}).
\begin{definition}
Let $X$ be a $n$-dimensional compact strongly Gauduchon manifold, the strongly Gauduchon cone $\mathcal{SG} $ is defined as
\begin{align*}
\mathcal{SG}:=\{ [\Omega]\in H^{2n-2}(X,\mathbb{R})|\Omega\ \text{is a SG metric}\},
\end{align*}
where $H^{2n-2}(X,\mathbb{R})$ is the $(2n-2)$-th de Rham cohomology group.
\end{definition}
Since SG manifolds is stable under proper modifications, it's natural to consider the induced action of $\mu$ on strongly Gauduchon cones $\mathcal{SG}$. Intuitively, strongly Gauduchon metrics are insensitive to subvarieties with codimension greater than one by degree reasons. In this paper, we obtain the following result.
\begin{theorem}
\label{pushforward}
If $\mu: \widetilde{X}\rightarrow X$ is a proper modification between strongly Gauduchon manifolds, then the push-forward map $\mu_{*}$ induced a surjective map between strongly Gauduchon cones, that is, $\mu_{*}\mathcal{SG}(\widetilde{X})=\mathcal{SG}(X)$.
\end{theorem}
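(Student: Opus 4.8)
The plan is to make sense of $\mu_*$ on cohomology first, and then prove the two inclusions $\mathcal{SG}(X)\subseteq\mu_*\mathcal{SG}(\widetilde X)$ and $\mu_*\mathcal{SG}(\widetilde X)\subseteq\mathcal{SG}(X)$ separately: the first by an explicit construction, the second by a regularization argument. The push-forward of currents preserves bidegree (since $\mu$ is holomorphic), reality, $d$-closedness and positivity of the $(n-1,n-1)$-component, and it commutes with $d$; hence it descends to an $\mathbb R$-linear map $\mu_*\colon H^{2n-2}(\widetilde X,\mathbb R)\to H^{2n-2}(X,\mathbb R)$. Since $\mu$ is a modification, $\int_{\widetilde X}\mu^{*}\theta=\int_X\theta$ for every top-degree form $\theta$, so from $\langle\mu_*\mu^{*}\alpha,\varphi\rangle=\int_{\widetilde X}\mu^{*}(\alpha\wedge\varphi)=\int_X\alpha\wedge\varphi$ one gets $\mu_*\mu^{*}=\mathrm{id}$ on $H^{2n-2}(X,\mathbb R)$. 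In particular $\mu_*$ is already surjective on the whole of $H^{2n-2}$ and $H^{2n-2}(\widetilde X,\mathbb R)=\mu^{*}H^{2n-2}(X,\mathbb R)\oplus\ker\mu_*$, so only the compatibility with positivity is at stake.

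For the inclusion $\mathcal{SG}(X)\subseteq\mu_*\mathcal{SG}(\widetilde X)$: let $\Omega$ be a strongly Gauduchon metric on $X$. Then $\mu^{*}\Omega$ is a smooth $d$-closed real $(2n-2)$-form on $\widetilde X$ whose $(n-1,n-1)$-component $\mu^{*}(\Omega^{n-1,n-1})$ is semipositive, strictly positive off $E$, and degenerate only along the fibre directions of $E$. It therefore suffices to produce a smooth $d$-closed real $(2n-2)$-form $\Psi$ on $\widetilde X$ with $\mu_*[\Psi]=0$ in $H^{2n-2}(X,\mathbb R)$ and $\Psi^{n-1,n-1}>0$ on a neighbourhood of $E$: then $\mu^{*}\Omega+\varepsilon\Psi$ is a strongly Gauduchon metric on $\widetilde X$ for all small $\varepsilon>0$ (its $(n-1,n-1)$-part is positive near $E$ because $\mu^{*}(\Omega^{n-1,n-1})\ge 0$ there, and positive away from $E$ because $\mu^{*}(\Omega^{n-1,n-1})$ has a uniform positive lower bound there while $\varepsilon\Psi^{n-1,n-1}$ is small), and $\mu_*[\mu^{*}\Omega+\varepsilon\Psi]=[\Omega]$. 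To build $\Psi$, fix a strongly Gauduchon metric $\widetilde\Omega_0$ on $\widetilde X$ (it exists since $\widetilde X$ is strongly Gauduchon). As $Y=\mu(E)$ has complex codimension $\ge 2$, a regular neighbourhood $U$ of $Y$ deformation retracts onto $Y$, so $H^{2n-2}(U,\mathbb R)\cong H^{2n-2}(Y,\mathbb R)=0$ for dimension reasons; hence if $\zeta$ is a smooth closed representative of $\mu^{*}\mu_*[\widetilde\Omega_0]$ on $\widetilde X$, then on $V:=\mu^{-1}(U)$ the form $\zeta|_V$ represents $\mu^{*}$ of $\mu_*[\widetilde\Omega_0]|_{U}\in H^{2n-2}(U,\mathbb R)=0$, so $\zeta|_V=d\eta$ with $\eta$ smooth on $V$, and for a cutoff $\chi$ equal to $1$ near $E$ with $\operatorname{supp}\chi\subset V$ the form $\alpha':=\zeta-d(\chi\eta)$ is smooth, closed, cohomologous to $\zeta$, and vanishes near $E$. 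Then $\Psi:=\widetilde\Omega_0-\alpha'$ is smooth, $d$-closed, real, satisfies $\mu_*[\Psi]=\mu_*[\widetilde\Omega_0]-\mu_*\mu^{*}\mu_*[\widetilde\Omega_0]=0$, and equals $\widetilde\Omega_0$ --- hence has positive $(n-1,n-1)$-component --- near $E$, as wanted. This is exactly the heuristic that strongly Gauduchon metrics do not feel codimension $\ge 2$ subvarieties, made precise.

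For the inclusion $\mu_*\mathcal{SG}(\widetilde X)\subseteq\mathcal{SG}(X)$: let $\widetilde\Omega$ be a strongly Gauduchon metric on $\widetilde X$. Then $T:=\mu_*\widetilde\Omega$ is a $d$-closed real $(2n-2)$-current on $X$ which is smooth with strictly positive $(n-1,n-1)$-component on $X\setminus Y$ (where $\mu$ restricts to a biholomorphism) and whose $(n-1,n-1)$-component is a positive current on all of $X$; one must exhibit a genuine smooth strongly Gauduchon metric in the de Rham class $[T]$. The strategy is to regularize $T$ in a neighbourhood of $Y$ only, keeping it $d$-closed, keeping it in the class $[T]$, and keeping its $(n-1,n-1)$-component positive: one uses that $T$ is already smooth and strictly positive outside the codimension $\ge 2$ set $Y$ (so that $[T]$ is locally trivial near $Y$, which controls the correction terms ``by degree reasons'') together with the openness of $\mathcal{SG}(X)$; this is the cohomologically refined version of the regularization carried out in the proof of the stability of strongly Gauduchon metrics under modifications (\cite{Pop10}, cf. \cite{AB95}), and, if convenient, one may first reduce by Hironaka's elimination of indeterminacies to a composition of blow-ups along smooth centres, where the relevant neighbourhoods of $Y$ and of $E$ have explicit local models. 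Equivalently, granting the Hahn--Banach duality description of $\mathcal{SG}(X)$ (the analogue of Michelsohn's description of the balanced cone, \cite{Mich82}, \cite{Pop09}): for every nonzero $d$-closed positive $(1,1)$-current $S$ on $X$ the pullback $\mu^{*}S$ is again a nonzero $d$-closed positive $(1,1)$-current on $\widetilde X$ --- nonzero because such a current cannot be supported on $Y$, by the support theorem --- so $\langle\mu_*[\widetilde\Omega],[S]\rangle=\langle\widetilde\Omega^{n-1,n-1},\mu^{*}S\rangle>0$, whence $\mu_*[\widetilde\Omega]\in\mathcal{SG}(X)$.

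The main obstacle is precisely this last point: turning the pushed-forward current $T=\mu_*\widetilde\Omega$, which carries its positivity only in the current sense and is singular near $Y$, into a smooth strongly Gauduchon metric in the same cohomology class. The rest --- the cohomological bookkeeping for $\mu_*$ and $\mu^{*}$, and the explicit correction $\Psi$ for the reverse inclusion --- is formal once the high codimension of $Y$ (and the divisorial character of $E$) is exploited. I expect the write-up to spend most of its effort either on this localized regularization near $Y$, or, equivalently, on establishing the strongly Gauduchon analogue of Michelsohn's duality for the cone $\mathcal{SG}(X)$.
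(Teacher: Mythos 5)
Your proposal is correct and its skeleton matches the paper's: both inclusions are treated separately, the inclusion $\mu_*\mathcal{SG}(\widetilde X)\subseteq\mathcal{SG}(X)$ via the cone duality (pull back a $d$-closed positive $(1,1)$-current $T$ through its local potentials, note $[\mu_*\widetilde\Omega]\cdot[T]=[\widetilde\Omega]\cdot[\mu^*T]\geq 0$ with equality forcing the support of $T$ into $Y$ and hence $T=0$ by the support theorem), and the inclusion $\mathcal{SG}(X)\subseteq\mu_*\mathcal{SG}(\widetilde X)$ via the vanishing of $H^{2n-2}$ on a neighbourhood of the codimension $\geq 2$ centre $Y$ together with a small perturbation of $\mu^*\Omega$. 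The differences are in the implementation of the second inclusion: the paper justifies the neighbourhood vanishing by Varouchas' lemma (\cite{Var86}) rather than your regular-neighbourhood retraction (same content), and builds its correction form by the Alessandrini--Bassanelli current-gluing of \cite{AB93} --- a current primitive $R$ of $\mu_*\widetilde\Omega$ near $Y$, a smooth primitive $\beta$ off $Y$, a cut-off comparison, pull-back by $\mu^{-1}$ and gluing with $\widetilde\Omega$ to get a form $\widetilde\chi$ whose push-forward is $d$-exact. Your correction $\Psi=\widetilde\Omega_0-\alpha'$, obtained by modifying a smooth representative of $\mu^*\mu_*[\widetilde\Omega_0]$ so that it vanishes near $E$ and using $\mu_*\mu^*=\mathrm{id}$, achieves the same effect ($\mu_*[\Psi]=0$ and $\Psi=\widetilde\Omega_0$ near $E$) while staying entirely within smooth forms, which is arguably cleaner than the paper's current-theoretic gluing. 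Two caveats: the duality you ``grant'' is not literally in \cite{Mich82} or \cite{Pop09}; it is the paper's own Proposition \ref{sgdual} ($\mathcal{E}_d^{\vee}=\overline{\mathcal{SG}}$, proved by Hahn--Banach along Michelsohn--Lamari lines), so invoking it is consistent with how the paper itself proves Theorem \ref{push}, but in a self-contained treatment it would have to be established, as you anticipated. Also, your first suggested route for $\mu_*\mathcal{SG}(\widetilde X)\subseteq\mathcal{SG}(X)$ (a localized regularization of $\mu_*\widetilde\Omega$ near $Y$) is only a sketch and is not what the paper does; it is superfluous once the duality argument is in place.
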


In the last part of this paper, we also study the moduli of SG manifolds. Let $\pi: \mathcal{X}\rightarrow B$ be a holomorphic deformation of compact complex manifolds. It's easy to see that the SG property is preserved by small holomorphic deformation, and the strongly Gauduchon cones $\mathcal{SG}_{t}$ are invariant under parallel transport of the Gauss-Manin connection. Then it's natural to ask whether the total space is also a SG manifold if all the fibres are. In general, this does not hold. However, similar to the balanced case (\cite{Mich82}), we can prove the following result.  In \cite{Mich82}, Michelsohn proved that if a compact complex manifold admits an essential holomorphic map with balanced fibres onto a complex curve, then itself is  a balanced manifold. Here, we weaken the "essential" condition to "topologically essential" condition.
\begin{theorem}
\label{moduli}
Let $\pi: \mathcal{X}\rightarrow C$ be a topologically essential holomorphic map from a compact complex manifold $\mathcal{X}$ onto a compact complex curve $C$. If the generic fibres of $\pi$ are strongly Gauduchon manifolds, then $\mathcal{X}$ is also a strongly Gauduchon manifold.
\end{theorem}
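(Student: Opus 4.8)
The plan is to argue by duality, invoking the Hahn--Banach characterization of Proposition 1.3 both on $\mathcal{X}$ and on its generic fibres, in the spirit of Michelsohn's proof in the balanced case. Suppose $\mathcal{X}$ were not strongly Gauduchon. Then by Proposition 1.3 there is a nonzero positive current $T$ of bidegree $(1,1)$ on $\mathcal{X}$ which is $d$-exact, say $T=dS$; in particular $dT=0$, so $T$ is a nonzero closed positive $(1,1)$-current whose de Rham class $[T]\in H^{2}(\mathcal{X},\mathbb{R})$ vanishes. The whole point will be to contradict this using the fibration.

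The first step is to slice $T$ along $\pi$. For every $c\in C$ outside the finite set of critical values of $\pi$ and outside the slicing-exceptional set, the slice $T_{c}:=\langle T,\pi,c\rangle$ is a positive current of bidegree $(1,1)$ on the smooth compact fibre $F_{c}=\pi^{-1}(c)$ of dimension $n-1$; since slicing commutes with $d$, we get $T_{c}=d(S_{c})$, so $T_{c}$ is $d$-exact on $F_{c}$. As $F_{c}$ is strongly Gauduchon for generic $c$, Proposition 1.3 applied on $F_{c}$ forces $T_{c}=0$ for almost every $c$. Next I would upgrade ``vanishing generic slices'' to ``$T$ is a pullback from $C$'': working in local coordinates $(z^{1},\dots,z^{n-1},w)$ adapted to $\pi$ (so $w=\pi$), write $T=i\sum T_{\alpha\bar\beta}\,dz^{\alpha}\wedge d\bar z^{\beta}$ with $(T_{\alpha\bar\beta})$ a positive semidefinite matrix of measures; vanishing of the slices says exactly that the vertical--vertical coefficients $T_{\alpha\bar\beta}$ with $\alpha,\beta\leq n-1$ vanish, positive semidefiniteness then forces the mixed coefficients to vanish too, and closedness $dT=0$ forces the surviving coefficient $T_{w\bar w}$ to be annihilated by all vertical derivatives, hence to be a pullback. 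Patching these local pictures gives $T=\pi^{*}\tau$ for a genuine positive measure $\tau$ on $C$.

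To finish, $0=[T]=\pi^{*}[\tau]$, and on the curve $C$ the class $[\tau]\in H^{2}(C,\mathbb{R})$ is just the total mass of $\tau$ times the fundamental class; topological essentiality of $\pi$ guarantees $\pi^{*}[\mathrm{pt}]\neq 0$ in $H^{2}(\mathcal{X},\mathbb{R})$, so $\tau$ has zero mass, whence $\tau=0$ and $T=0$, contradicting $T\neq 0$. Therefore $\mathcal{X}$ is strongly Gauduchon. I expect the structural step to be the main obstacle: one must slice the possibly singular positive current $T$ with care (well-definedness for a.e.\ $c$, preservation of positivity and bidegree, and compatibility $T_{c}=d(S_{c})$ with the exactness) and then carry out the pointwise positivity-plus-closedness argument at the level of currents rather than smooth forms. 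The reduction through Proposition 1.3 and the final cohomological computation are comparatively formal.
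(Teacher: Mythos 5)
Your overall strategy is the same as the paper's in spirit (the paper in effect constructs the slices by hand, wedging $T$ with smooth approximations $\pi^*\omega_\varepsilon$ of $\pi^*\delta_p$), but as written there are two genuine gaps. First, the step ``slicing commutes with $d$, so $T_c=d(S_c)$'' is not justified: slicing is available for locally normal (or flat) currents, and while $T$ is normal (closed, order zero), the primitive $S$ is an arbitrary current with distribution coefficients, so its slices need not exist. You flag this as the main obstacle, but it is precisely the point that needs an argument. It can be repaired --- e.g.\ choose a good primitive $S=d^{*}GT$ by Hodge theory, whose coefficients are $L^1_{loc}$, hence a normal current that can be sliced --- or one can follow the paper, which never slices $S$: it pushes the normalized currents $\pi^*\omega_\varepsilon\wedge T=d(\pi^*\omega_\varepsilon\wedge S)$ forward to the fibre by the projection of a local $C^\infty$-trivialization and uses closedness of $\mathrm{Im}\,d$ on the compact fibre to conclude that the limit current on $F_p$ is exact, so that the fibre's SG property applies.

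Second, and more decisively, your structural conclusion ``patching the local pictures gives $T=\pi^*\tau$ globally'' is false in general: the local coordinates adapted to $\pi$ exist only over the regular values, so the argument only yields $T=\pi^*(f\omega)$ on $\mathcal{X}_r=\pi^{-1}(C_r)$. Over the critical values $T$ may have additional components, e.g.\ positive multiples of integration currents along irreducible components $E_i$ of singular fibres, which are closed positive $(1,1)$-currents not of pullback form. The paper handles this: it takes the trivial extension $\mu$ of $f\omega$ across the critical values, notes $T-\pi^*\mu$ is positive and supported on $\pi^{-1}(C\setminus C_r)$, applies the support theorem to write $T-\pi^*\mu=\sum_i c_i[E_i]$ with $c_i\ge 0$, and then invokes the \emph{full} definition of topological essentiality --- no positive linear combination of components of fibres is a boundary --- to kill both $\mu(C)[F_p]$ and the $c_i[E_i]$. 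Your final step uses only the weaker consequence $\pi^*[\mathrm{pt}]\neq 0$, which cannot rule out a current charging components of singular fibres; so even after fixing the slicing issue, your argument as written does not close, and it is exactly here that the stronger hypothesis in the paper's definition is needed.
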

Roughly speaking, $\pi: \mathcal{X}\rightarrow C$ is topologically essential if the fibres are not homology to zero. Indeed, the proof of this theorem is essential due to \cite{Mich82}. Since we feel this theorem is interesting in itself and haven't find the related statement in the other literatures, for readers' convenience, we will present its proof here. Moreover, it has the following interesting corollary.
\begin{corollary}
Let $\pi: \mathcal{X}\rightarrow C$ be a topologically essential holomorphic map from a compact complex manifold $\mathcal{X}$ onto a compact curve $C$. If the generic fibres of $\pi$ satisfy $\partial\bar\partial$-lemma, then $\mathcal{X}$ is a strongly Gauduchon manifold.
\end{corollary}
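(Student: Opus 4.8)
The plan is to obtain this as an immediate consequence of Theorem~\ref{moduli}, once we recall — exactly as already noted in the introduction — that a compact complex manifold satisfying the $\partial\bar\partial$-lemma always carries a strongly Gauduchon metric. In other words, the whole content of the corollary is the implication ``generic fibres satisfy the $\partial\bar\partial$-lemma $\Longrightarrow$ generic fibres are strongly Gauduchon manifolds'', after which Theorem~\ref{moduli} applies verbatim.

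First I would fix a generic fibre $F$ of $\pi$ and set $m=\dim_{\mathbb{C}}F$ (so $m=\dim_{\mathbb{C}}\mathcal{X}-1$), assuming $F$ satisfies the $\partial\bar\partial$-lemma. By Gauduchon's theorem $F$ admits a Gauduchon metric $\omega$, i.e. $\partial\bar\partial\omega^{m-1}=0$. Then $\partial\omega^{m-1}$ is a $d$-closed form which is moreover $\partial$-exact, so the $\partial\bar\partial$-lemma on $F$ gives $\partial\omega^{m-1}=\partial\bar\partial\gamma=\bar\partial(-\partial\gamma)$ for some smooth form $\gamma$; hence $\partial\omega^{m-1}$ is $\bar\partial$-exact and $\omega$ is a strongly Gauduchon metric on $F$. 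This is precisely the one-line argument already given in the introduction for manifolds satisfying the $\partial\bar\partial$-lemma.

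Since $\pi$ is topologically essential by hypothesis and its generic fibres are now known to be strongly Gauduchon manifolds, Theorem~\ref{moduli} applies directly and shows that $\mathcal{X}$ is a strongly Gauduchon manifold.

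I do not expect any genuine obstacle here: the corollary is a formal consequence of Theorem~\ref{moduli}, and the only step requiring a word of care is the invocation of the $\partial\bar\partial$-lemma on $F$ in its standard form — for a $d$-closed form on a compact complex manifold, being $\partial$-exact, $\bar\partial$-exact, $d$-exact and $\partial\bar\partial$-exact are all equivalent — applied to $\partial\omega^{m-1}$, which is visibly $d$-closed and $\partial$-exact. All the substance is contained in Theorem~\ref{moduli} (hence, as remarked in the text, in Michelsohn's fibration argument).
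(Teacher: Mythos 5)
Your proposal is correct and follows exactly the paper's route: the paper also deduces the corollary immediately from Theorem~\ref{moduli} via the observation, already made in the introduction, that a manifold satisfying the $\partial\bar\partial$-lemma is strongly Gauduchon (apply the $\partial\bar\partial$-lemma to the $d$-closed, $\partial$-exact form $\partial\omega^{m-1}$ of a Gauduchon metric $\omega$). No gaps; your explicit verification that $\partial\omega^{m-1}$ is $d$-closed and $\partial$-exact is just a slightly more detailed write-up of the same one-line argument.
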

We can ask whether $\mathcal{X}$ also satisfies $\partial\bar\partial$-lemma. Unfortunately, there exist counterexamples in this direction. For example, the $3$-dimensional Iwasawa manifold $I_{3}$ admits a topologically essential holomorphic map onto an elliptic curve whose fibres are all K$\ddot{a}$hler, but $I_{3}$ does't satisfy $\partial\bar\partial$-lemma.
\section{Preliminaries}
In this section we briefly discuss some results of currents, which we need in this paper. For more details, the readers can consult \cite{Dem09}. Let $X$ be an $n$-dimensional compact complex manifold, we denote the space of all complex or real valued smooth $k$-forms by $\mathcal{D}_{\mathbb{C}}^{k}$ or  $\mathcal{D}_{\mathbb{R}}^{k}$. For briefly, we will ignore the subscript. $\mathcal{D}^{k}$ is a Fr$\acute{e}$chet space, and it can be decomposited by the type of forms, for example,
$$\mathcal{D}^{k}=\bigoplus_{p+q=k}\mathcal{D}^{p,q}.$$
A current $T$ of degree $2n-k$ is a continuous linear functional on $\mathcal{D}^{k}$, then $T$ is just a form of degree $2n-k$ with distribution coefficients. If we denote the space of all currents of degree $2n-k$ by $\mathcal{D}^{'2n-k}$, then $\mathcal{D}^{'2n-k}$ also has a decomposition by its bidegrees,
$$\mathcal{D}^{'k}=\bigoplus_{p+q=k}\mathcal{D}^{'p,q}.$$
The space of currents $\mathcal{D}^{'2n-k}$ is dual to the space of smooth forms $\mathcal{D}^{k}$, and by this duality, we can naturally define the exterior differential operator $d,\partial,\bar\partial$ on currents. Therefore we have two de Rham complexes $(\mathcal{D}^{\bullet},d)$ and $(\mathcal{D}^{'\bullet},d)$. It's obvious that every smooth form is a current, thus we have an embedding of complex:
$$i:(\mathcal{D}^{\bullet},d)\rightarrow  (\mathcal{D}^{'\bullet},d).$$
Indeed, these two complexes give resolutions of the same sheaf of locally
constant functions, thus the morphism $i$ induces an isomorphism of cohomology
$$i:H^{\bullet}((\mathcal{D}^{\bullet},d))\widetilde{\rightarrow} H^{\bullet}((\mathcal{D}^{'\bullet},d)).$$
Thus, the de Rham cohomology groups can be defined by currents in an equivalent way. This principle also applies to Dolbeault cohomology groups.

Let $F:X\rightarrow Y$ be a holomorphic map between two compact complex manifolds, and let $T$ be a current on $X$, then the direct image of $T$ can be well defined by the pairing $\langle,\rangle$ of currents and forms. For any smooth form $\alpha$ on $Y$, $F_{*}T$ is a current such that
$$\langle F_{*}T, \alpha\rangle=\langle T,F^{*}\alpha\rangle .$$
The pairing $\langle,\rangle$ can usually represented by integration on the manifolds. Since $F$ is holomorphic, it's easy to see that $\partial F_{*}=F_{*}\partial$ and $\bar\partial F_{*}=F_{*}\bar\partial$. However, the pull-back of current is not easy to define except for some special cases.

Now let's introduce the concept of positive currents on $X$. A current $T\in \mathcal{D}^{'k,k}$ is said to be positive, if
$$ T\wedge i\alpha_{1}\wedge\overline{\alpha_{1}}\wedge\cdot\cdot\cdot\wedge i\alpha_{n-k}\wedge\overline{\alpha_{n-k}}$$
is a positive measure for all $\alpha_{1},\cdot\cdot\cdot, \alpha_{n-k}\in \mathcal{D}^{1}$. It's easy to see that positive currents are real and of order zero. In this paper, we will deal with $d$-closed positive currents. And by Lelong, it's well known that the integration current $[V]$ defined by a $p$-dimensional subvariety $V\subset X$ is a $d$-closed positive $(n-p,n-p)$-current. The support of a current $T\in \mathcal{D}^{'p}$ is the smallest closed subset $K$ such that the restriction of $T$ to $\mathcal{D}_{c}^{2n-p}(X\backslash K)$ is zero, where $\mathcal{D}_{c}^{2n-p}(X\backslash K)$ is the space of smooth forms with compact support in $X\backslash K$.
For $d$-closed positive currents, we have the following important support theorem:\\

$\bullet$ Let $V$ be a subvariety of $X$ with global irreducible components $V_{i}$ of pure dimension $p$. Then any $d$-closed current $T\in \mathcal{D}^{'n-p,n-p}$ of order zero with support in $V$ is of the form $T=\sum_{i}c_{i}[V_{i}]$ where $c_{i}\in \mathbb{C}$. Moreover, $T$ is positive if and only if all $c_{i}\geq 0$.\\

The above theorem implies any $d$-closed positive $(p,p)$-current supporting on a subvariety with codim$> p$ must vanish on $X$. In the following sections, we will deal with $d$-closed positive $(1,1)$-currents. If $T$ is such a current, then locally $T=dd^{c}u$ for some plurisubharmonic function $u$. \\

\section{Strongly Gauduchon cones $\mathcal{SG}$}
In this section, we investigate some properties of strongly Gauduchon cones. Let $X$ be a compact SG manifold, recall that its SG cone is defined as following,
\begin{align*}
\mathcal{SG}(X):=\{[\Omega]\in H^{2n-2}(X,\mathbb{R})|\Omega\ \text {is a SG metric} \}.
\end{align*}
Then $\mathcal{SG}(X)$ is an open convex cone in the de Rham cohomology group. Since SG manifolds can be characterized by $d$-exact positive $(1,1)$-currents, we find it's useful to define a cone generated by $d$-closed positive currents. We denote it by $\mathcal{E}_{d}$,
\begin{align*}
\mathcal{E}_{d}(X):=\{[T]\in H^{2}(X,\mathbb{R})|T\ \text {is a positive (1,1)-current} \}.
\end{align*}\\

We claim that $\mathcal{E}_{d}(X)$ is a closed convex cone.

Fix a SG metric $\Omega$ on $X$ and a norm on the space $H^{2}(X,\mathbb{R})$. If $\{[T_{n}] \}\in \mathcal{E}_{d}(X)$ is a sequence such that $[T_{n}]\rightarrow \alpha \in H^{2}(X,\mathbb{R})$.
Then we have
\begin{align*}
|| T_{n}||_{\text {mass}}=\int_{X}T_{n}\wedge \Omega =\int_{X}T_{n}\wedge \Omega^{n-1,n-1}\leq C,
\end{align*}
where $C=C(\alpha)$ is a uniform positive constant. To see this, we just need to note that the integral $\int_{X}T_{n}\wedge \Omega $ is independent of the choices of representations in the class $[T_{n}]$. Since for any other positive current $T'_{n}\in [T_{n}]$,
$T'_{n}-T_{n}\in$Im$d$, thus
$$ || T_{n}||_{\text {mass}}= [T_{n}]\cdot[\Omega].$$
The above equality holds because the Im$d$ part contributes nothing by an application of Stoke's theorem and $d\Omega=0$. And by weak compactness of positive currents, we know there exists a convergent subsequence ${T_{n_{j}}}$ such that ${T_{n_{j}}}\rightarrow T$, where $T\in \alpha$ is a positive current. Therefore we get $\mathcal{E}_{d}=\overline{\mathcal{E}_{d}}$.\\

By Hodge theory, we know that the cup product between $H^{2}(X,\mathbb{R})$ and $H^{2n-2}(X,\mathbb{R})$ is non-degenerate and the cup product defines a natural dual of these two linear spaces. We prove that the cup product also induces a cone duality.\\

\begin{proposition}
\label{sgdual}
Assume $X$ is a compact strongly Gauduchon manifold, then $\mathcal{E}_{d}^{\vee}=\overline{\mathcal{SG}}$.
\end{proposition}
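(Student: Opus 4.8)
The plan is to split the asserted equality into the two cone inclusions and to carry the weight of the argument in a single Hahn--Banach step. For the easy inclusion, observe that for $[\Omega]\in\mathcal{SG}(X)$ and $[T]\in\mathcal{E}_{d}(X)$ the cup product is $[\Omega]\cdot[T]=\int_{X}T\wedge\Omega=\int_{X}T\wedge\Omega^{n-1,n-1}$, which is independent of the chosen representatives by Stokes' theorem (exactly as in the computation showing $\mathcal{E}_{d}(X)$ is closed) and is $\geq 0$ because a positive $(1,1)$-current wedged with a positive $(n-1,n-1)$-form is a positive measure. Hence $\mathcal{E}_{d}(X)\subseteq\mathcal{SG}(X)^{\vee}$. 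Since the pairing $H^{2}(X,\mathbb{R})\times H^{2n-2}(X,\mathbb{R})\to\mathbb{R}$ is nondegenerate and $\mathcal{SG}(X)$ is a convex cone, the bipolar theorem gives $\overline{\mathcal{SG}(X)}=(\mathcal{SG}(X)^{\vee})^{\vee}\subseteq\mathcal{E}_{d}(X)^{\vee}$; and since $\mathcal{E}_{d}(X)$ is a closed convex cone, the reverse inclusion $\mathcal{E}_{d}(X)^{\vee}\subseteq\overline{\mathcal{SG}(X)}$ is equivalent to $\mathcal{SG}(X)^{\vee}\subseteq\mathcal{E}_{d}(X)$. So the substantial task is the following: \emph{if $\alpha\in H^{2}(X,\mathbb{R})$ satisfies $\alpha\cdot[\Omega]\geq 0$ for every SG metric $\Omega$, then $\alpha$ is represented by a $d$-closed positive $(1,1)$-current.}

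First I would normalize the representative. Pick a closed real $2$-form $\alpha_{0}\in\alpha$. For any $d$-closed $(n,n-2)$-form $\gamma$ and any SG metric $\Omega$, the form $\Omega-c\gamma-\overline{c\gamma}$ is again an SG metric for every $c\in\mathbb{C}$ (it is $d$-closed, real, and its $(n-1,n-1)$-component is unchanged); feeding this into the hypothesis and letting $c$ range over $\mathbb{C}$ forces $\int_{X}\alpha_{0}^{0,2}\wedge\gamma=0$. Since $(n,n-2)$-forms are automatically $\partial$-closed, this says $\alpha_{0}^{0,2}$ pairs to zero with all $\bar\partial$-closed $(n,n-2)$-forms, so Serre duality gives $[\alpha_{0}^{0,2}]=0$ in $H^{0,2}_{\bar\partial}(X)$; writing $\alpha_{0}^{0,2}=\bar\partial\bar\sigma$ with $\sigma$ a smooth $(0,1)$-form and replacing $\alpha_{0}$ by $\alpha_{0}-d(\sigma+\bar\sigma)$, we may assume $\alpha_{0}$ is a closed real $(1,1)$-form.

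The core is a Hahn--Banach argument in $V=\mathcal{D}^{'1,1}_{\mathbb{R}}(X)$ with its weak-$*$ topology, whose dual is $V^{*}=\mathcal{D}^{n-1,n-1}_{\mathbb{R}}(X)$ under $\langle T,\psi\rangle=\int_{X}T\wedge\psi$. Let $L\subseteq V$ be the weak-$*$ closed real subspace of $d$-exact $(1,1)$-currents — equivalently $L=\{\bar\partial S+\overline{\bar\partial S}:S\text{ a }(1,0)\text{-current},\ \partial S=0\}$ — and let $\mathcal{C}$ be the weak-$*$ closure of $\{P+\ell:P\text{ a positive }(1,1)\text{-current},\ \ell\in L\}$. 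Integration by parts identifies the polar cone: $\psi\in\mathcal{C}^{\circ}$ iff $\psi\geq 0$ and $\int_{X}S\wedge\bar\partial\psi=0$ for every $\partial$-closed $(1,0)$-current $S$, which by duality means $\bar\partial\psi$ is $\partial$-exact, i.e. $\partial\psi$ is $\bar\partial$-exact. Thus $\mathcal{C}^{\circ}$ is exactly the set of semipositive real $(n-1,n-1)$-forms $\psi$ with $\partial\psi$ $\bar\partial$-exact; because $X$ is an SG manifold, perturbing by a multiple of $\omega^{n-1}$ for a fixed SG metric $\omega$ shows $\mathcal{C}^{\circ}$ is the closure of $\{\Omega^{n-1,n-1}:\Omega\text{ an SG metric}\}$. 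If $\psi>0$ lies in $\mathcal{C}^{\circ}$, choose $\beta$ with $\partial\psi=\bar\partial\beta$ and set $\Omega=\psi-\beta-\bar\beta$, an SG metric; since $\alpha_{0}$ is now of type $(1,1)$ we get $\langle\alpha_{0},\psi\rangle=\int_{X}\alpha_{0}\wedge\Omega=\alpha\cdot[\Omega]\geq 0$, and by density $\langle\alpha_{0},\psi\rangle\geq 0$ for all $\psi\in\mathcal{C}^{\circ}$. The bipolar theorem then yields $\alpha_{0}\in\mathcal{C}$.

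It remains to remove the closure. Fix a reference SG metric $\omega$; since $\omega^{n-1}\in\mathcal{C}^{\circ}$, every $\ell\in L$ satisfies $\langle\ell,\omega^{n-1}\rangle=0$. Writing $\alpha_{0}=\lim_{j}(P_{j}+\ell_{j})$ weakly, the mass of $P_{j}$ is controlled by $\|P_{j}\|_{\mathrm{mass}}\leq C\int_{X}P_{j}\wedge\omega^{n-1}=C\int_{X}(P_{j}+\ell_{j})\wedge\omega^{n-1}\longrightarrow C\int_{X}\alpha_{0}\wedge\omega^{n-1}<\infty$, so after passing to a subsequence $P_{j}\to P$ weakly with $P$ a positive $(1,1)$-current; then $\ell:=\alpha_{0}-P=\lim_{j}\ell_{j}$ lies in $L$ (as $L$ is weak-$*$ closed), and $P=\alpha_{0}-\ell$ is a $d$-closed positive $(1,1)$-current with $[P]=\alpha$. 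Hence $\mathcal{SG}(X)^{\vee}\subseteq\mathcal{E}_{d}(X)$, so $\mathcal{SG}(X)^{\vee}=\mathcal{E}_{d}(X)$ and, taking duals once more, $\mathcal{E}_{d}(X)^{\vee}=\overline{\mathcal{SG}(X)}$. I expect the main obstacle to be this Hahn--Banach step: correctly identifying $\mathcal{C}^{\circ}$ with the closure of the cone of $(n-1,n-1)$-parts of SG metrics, and removing the weak-$*$ closure in the definition of $\mathcal{C}$. The latter is precisely where one uses that $X$ itself admits an SG metric — this furnishes the reference form $\omega^{n-1}$ that annihilates $L$ and bounds the masses of the $P_{j}$ — together with the weak compactness of positive currents of bounded mass and the closed-range property of $d$ on currents.
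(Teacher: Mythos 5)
Your argument is correct in substance, but it takes a genuinely different route from the paper. You dualize the problem: using the nondegeneracy of the cup product, the closedness of $\mathcal{E}_{d}$ (established just before the proposition) and the finite-dimensional bipolar theorem, you reduce the statement to $\mathcal{SG}^{\vee}\subseteq\mathcal{E}_{d}$, i.e.\ to a Lamari-type duality: any $\alpha\in H^{2}(X,\mathbb{R})$ pairing nonnegatively with all SG classes contains a $d$-closed positive $(1,1)$-current. You prove this by first normalizing a representative to a closed real $(1,1)$-form (exploiting that adding $-c\gamma-\overline{c\gamma}$ for closed $(n,n-2)$-forms $\gamma$ preserves SG metrics, plus Serre duality), then running Hahn--Banach/bipolar in $\mathcal{D}^{'1,1}_{\mathbb{R}}$ for the closed cone generated by positive currents and $d$-exact $(1,1)$-currents, identifying its polar with the closure of the cone of $(n-1,n-1)$-parts of SG metrics, and finally removing the weak-$*$ closure via a mass bound against a reference SG metric, weak compactness of positive currents, and the closed range of $d$. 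The paper instead stays on the $H^{2n-2}$ side: it takes an interior point of $\mathcal{E}_{d}^{\vee}$, verifies the condition ($\star$), separates by Hahn--Banach the closed currents annihilated by the class from the positive currents of unit mass to produce an SG metric $\Omega$, and then shows $[\Phi]=c[\Omega]$ by comparing kernels, with a separate discussion of the degenerate cases $[\Omega]=0$ and $\mathcal{E}_{d}=\{[0]\}$. Your route yields the stronger dual equality $\mathcal{SG}^{\vee}=\mathcal{E}_{d}$ (an explicit positive-current representative) and avoids the paper's case analysis and kernel-dimension argument, at the price of invoking Serre duality, closed-range facts for $\partial$ and $d$, and the infinite-dimensional bipolar theorem; the paper's argument is shorter on machinery but produces the SG metric only up to the proportionality and case discussion. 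Minor repairs to make: $\sigma$ should be a $(1,0)$-form so that $\bar\partial\bar\sigma$ has type $(0,2)$; the weak-$*$ closure of your cone must be handled with nets rather than sequences (your mass-bound and compactness argument goes through verbatim for a net); and the weak-$*$ closedness of $L$ deserves its one-line justification, namely that $d$-exactness of a closed current is equivalent to the vanishing of its pairing with all $d$-closed smooth $(2n-2)$-forms, which is the closed-range property you cite.
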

\begin{proof}
Firstly, it's obvious $\overline{\mathcal{SG}}\subseteq \mathcal{E}_{d}^{\vee}$ by the positivity of these two cones, so we only need to prove $\mathcal{E}_{d}^{\vee} \subseteq \overline{\mathcal{SG}}$.
Instead of proving $\mathcal{E}_{d}^{\vee} \subseteq \overline{\mathcal{SG}}$ directly, we prove its equivalent form, that is, the interior of $\mathcal{E}_{d}^{\vee}$ is contained in $\mathcal{SG}$. \\

We first assume $\mathcal{E}_{d}\neq \{[0]\}$.\\

Give two norms on the cohomology groups $H^{2}(X,\mathbb{R})$ and $H^{2n-2}(X,\mathbb{R})$, which we denote it by the same symbol $||\cdot||$. If $l\in \mathcal{E}_{d}^{\vee}$ is an interior point, then for small perturbation $\delta$, we still have $l+\delta\in \mathcal{E}_{d}^{\vee}$.
Here we take $\delta=-[\Omega]$ for some SG class $[\Omega]$, and $||[\Omega]||<\varepsilon$ for some small positive $\varepsilon$. Thus for any $\alpha \in \mathcal{E}_{d}$, we have $(l+\delta)\cdot \alpha\geq 0$. By linearity, we can restrict $\alpha$ to the $||\cdot||$-unit sphere. Then by compactness of $\mathcal{E}_{d}\cap$the $||\cdot||$-unit sphere, the function
$$\alpha\mapsto -\delta\cdot \alpha=[\Omega]\cdot \alpha$$
attains its maximum $M>0$ on $\mathcal{E}_{d}\cap$the $||\cdot||$-unit sphere. Thus, for interior point $l\in \mathcal{E}_{d}^{\vee}$, we have
\begin{align*}
l\cdot\alpha\geq M>0,\ \text{for} \ \alpha\in\ \mathcal{E}_{d}\cap||\cdot||-\text{unit sphere}.
\end{align*}
We now in a position to prove the following statement: \\

($\star$) let $\Phi$ be a real $d$-closed smooth
$(2n-2)$-form, if $ [\Phi]\cdot[T]\geq 0$ for all $d$-closed positive $(1,1)$-currents $T$, and
$ [\Phi]\cdot[T]= 0$ if and only if $T = 0$, then $[\Phi]\in H^{2n-2}(X,\mathbb{R})$ is a SG class.
\\

We denote the Fr$\acute{e}$chet space of real currents of degree two by $\mathcal{D}^{'2}_{\mathbb{R}}$, and fix
a hermitian metric $\omega$ on $X$. We define the mass of a positive $(1,1)$-current
T by $||T||_{mass} = \int_{X}T\wedge \omega^{n-1}$.

Then we define the following two subsets of
$\mathcal{D}^{'2}_{\mathbb{R}}$ associated to $\Phi$:
\begin{align*}
&S_{1}=\{ T\in \mathcal{D}'_{\mathbb{R}}|dT=0 \ \text{and}\ [\Phi]\cdot[T]=0\},\\
&S_{2}=\{ T\in \mathcal{D}'_{\mathbb{R}}|T\geq0 \ \text{and}\ ||T||_{mass}=1\}.
\end{align*}
Then $ S_{1}\cap S_{2}=\emptyset$ and $S_{1}$ contains the subspace $d\mathcal{D}^{'1}_{\mathbb{R}}$, moreover, $S_{1}$ is a
closed subspace and $S_{2}$ is a compact convex subset. Thus the Hahn-Banach
theorem implies there exists a real $(2n-2)$-form
$\Omega$ such that
$\Omega|_{S_{1}}=0$ and $\Omega|_{S_{2}}>0$. Here we consider $\Omega$ as a linear functional on $\mathcal{D}^{'2}_{\mathbb{R}}$. And $\Omega|_{S_{1}}=0$ implies $d\Omega=0$, $\Omega|_{S_{2}}>0$ implies $\Omega^{n-1,n-1}>0$. In the other words, $\Omega$ is a SG metric.

Since $S_{1}$ is closely related to $\Phi$ and $\Omega$ is determined by $S_{1},S_{2}$, a poriori, the SG metric $\Omega$ is related with $\Phi$. Indeed, if $[\Omega]\neq 0$ in $H^{2n-2}(X,\mathbb{R})$, there is a positive constant $c>0$ such that $[\Phi]=c[\Omega]$. Thus, $[\Phi]$ is a SG class.\\

Case $1:$ $[\Omega]\neq 0$ in $H^{2n-2}(X,\mathbb{R})$. We consider the following quotient map:
$$\pi:\{T\in \mathcal{D}^{'2}_{\mathbb{R}}\rightarrow  H^{2}(X,\mathbb{R}) \} .$$
If we consider $[\Phi],[\Omega]$ as linear functionals on $ H^{2}(X,\mathbb{R})$, then by the definition of $S_{1}$, we have
$$ker[\Phi]=\pi(S_{1})\subseteq ker[\Omega]\subseteq  H^{2}(X,\mathbb{R}).$$
Then $[\Omega]\neq 0$ yields dim$ker[\Phi]=$dim$ker[\Omega]=b_{2}-1$, where $b_{2}$ is the second betti number of $X$. So $ker[\Phi]=ker[\Omega]$, and $[\Phi]=c[\Omega]$ for some positive constant $c$.\\

Case $2:$ $[\Omega]= 0$ in $H^{2n-2}(X,\mathbb{R})$. In this case, the compactness of $X$ implies $\mathcal{SG}(X)=H^{2n-2}(X,\mathbb{R})$. For any smooth $d$-closed $(2n-2)$-form $\Theta$, there always exists a large positive constant $C$ such that $$\Theta^{n-1,n-1}+C\Omega^{n-1,n-1}>0,$$
i.e., $\Theta+C\Omega$ is a SG metric. And $[\Theta+C\Omega]=[\Theta]$ means $[\Theta]\in \mathcal{SG}(X)$. Thus by the arbitrary of $\Theta$, we have $[\Phi]\in \mathcal{SG}(X)$.\\

Next let's deal with the case which $\mathcal{E}_{d}=\{ [0]\}$.\\

Equality $\mathcal{E}_{d}=\{ [0]\}$ means that any $d$-closed positive $(1,1)$-current is indeed $d$-exact. Thus for any $T\in \mathcal{E}_{d}$, $T=dS$ for some current $S$. And since $X$ is a SG manifold, $T=0$. So $\mathcal{E}_{d}=\{ [0]\}$ implies
$d$-closed positive $(1,1)$-currents vanish, and the statement ($\star$) automatically holds true for any smooth form in $H^{2n-2}(X,\mathbb{R})$. In particular, we can choose $[\Phi]$ as a zero class in $H^{2n-2}(X,\mathbb{R})$. Then we can also produce a SG class $[\Omega]$ and the inclusion
$$ ker[\Phi]\subseteq ker[\Omega].$$
And $ker[\Phi]=H^{2n-2}(X,\mathbb{R})$ implies $ker[\Omega]=H^{2n-2}(X,\mathbb{R})$, thus the SG class $[\Omega]=[0]$ and we reach the situation as case $2$ above. \\

In summary, we have proved the equality $\mathcal{E}_{d}^{\vee}=\overline{\mathcal{SG}}.$

\end{proof}

An immediate corollary is the strictly positivity of real smooth $d$-closed $(2n-2)$-forms which degenerate only at subvarieties with codimension greater than one.
\begin{corollary}
\label{positive}
Let $X$ be a compact $n$-dimensional strongly Gauduchon manifold, and let $\Phi$ be a real smooth $d$-closed $(2n-2)$-form with $\Phi^{n-1,n-1}$ strictly positive outside a subvariety $V$. Assume the irreducible components with highest dimension of $V$ are $\{ V_{i}\}$.
If codim$V\geq 2$, or codim$V=1$ and the intersection numbers $[\Phi]\cdot[ V_{i}]$ are all positive, then $[\Phi]$ is a strongly Gauduchon class.
\end{corollary}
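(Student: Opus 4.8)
The plan is to derive the corollary from Proposition~\ref{sgdual}, or rather from the intermediate assertion $(\star)$ proved along the way: a real $d$-closed smooth $(2n-2)$-form $\Phi$ represents a strongly Gauduchon class as soon as $[\Phi]\cdot[T]\ge 0$ for every $d$-closed positive $(1,1)$-current $T$, with equality forcing $T=0$. So the entire task is to verify these two conditions for the form $\Phi$ at hand. One preliminary remark: $V$ is a proper analytic subset, hence nowhere dense, so by continuity the strict positivity of $\Phi^{n-1,n-1}$ on the dense open set $X\setminus V$ already forces $\Phi^{n-1,n-1}\ge 0$ as a (weakly, equivalently strongly) positive $(n-1,n-1)$-form on all of $X$.

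First I would check the nonnegativity of the pairing. For any $d$-closed positive $(1,1)$-current $T$ we have $T\wedge\Phi=T\wedge\Phi^{n-1,n-1}$ for bidegree reasons, and a positive $(1,1)$-current wedged with a positive smooth $(n-1,n-1)$-form is a positive measure; hence $[\Phi]\cdot[T]=\int_X T\wedge\Phi^{n-1,n-1}\ge 0$. As in the proof that $\mathcal{E}_d(X)$ is closed, this integral depends only on the two cohomology classes, so the intersection number is well defined and nonnegative. This half is formal.

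The substantive step is strictness: if $T$ is a $d$-closed positive $(1,1)$-current with $[\Phi]\cdot[T]=0$, I must conclude $T=0$. Since the positive measure $T\wedge\Phi^{n-1,n-1}$ has total mass $[\Phi]\cdot[T]=0$, it vanishes identically. On every compact $K\subset X\setminus V$ the form $\Phi^{n-1,n-1}$ is strictly positive, so $\Phi^{n-1,n-1}\ge c_K\,\omega^{n-1}$ there for a fixed Hermitian metric $\omega$ and some $c_K>0$; comparing positive measures, $T\wedge\omega^{n-1}\le c_K^{-1}\,T\wedge\Phi^{n-1,n-1}=0$ on $K$. Thus the trace measure of $T$ vanishes on $X\setminus V$, i.e.\ $\operatorname{supp}T\subseteq V$. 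If $\operatorname{codim}V\ge 2$, then $T$ is a $d$-closed positive $(1,1)$-current supported on a subvariety of codimension $>1$, so $T=0$ by the support theorem of Section~2, and no hypothesis on intersection numbers is needed. If $\operatorname{codim}V=1$, then on the complement of $\bigcup_i V_i$ the current $T$ is supported on an analytic set of codimension $\ge 2$ and hence vanishes there; so $\operatorname{supp}T\subseteq\bigcup_i V_i$, and the support theorem gives $T=\sum_i c_i[V_i]$ with all $c_i\ge 0$. Then $0=[\Phi]\cdot[T]=\sum_i c_i\,\big([\Phi]\cdot[V_i]\big)$ with every $[\Phi]\cdot[V_i]>0$ forces $c_i=0$ for all $i$, so again $T=0$. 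In both cases the hypotheses of $(\star)$ hold, and therefore $[\Phi]$ is a strongly Gauduchon class.

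I expect the main obstacle to be exactly this last step: passing from the analytic vanishing $T\wedge\Phi^{n-1,n-1}=0$ to the geometric conclusion $\operatorname{supp}T\subseteq V$, and then extracting from the structure theory of $d$-closed positive currents (the support theorem, together with the fact that $d$-closed positive $(1,1)$-currents ignore analytic sets of codimension $\ge 2$) the precise form $T=\sum_i c_i[V_i]$ with $c_i\ge 0$. Everything else — the reduction to $(\star)$ and the nonnegativity of the pairing — is routine. This is also the place where the heuristic from the introduction, that strongly Gauduchon metrics are insensitive to subvarieties of codimension greater than one, becomes a theorem: in the codimension $\ge 2$ case the conclusion is unconditional.
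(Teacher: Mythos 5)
Your proposal is correct and follows essentially the same route as the paper: reduce to the criterion $(\star)$ established in the proof of Proposition~\ref{sgdual}, show that $[\Phi]\cdot[T]=0$ forces $\operatorname{supp}T\subseteq V$, invoke the support theorem to write $T=\sum_i c_i[V_i]$ with $c_i\ge 0$, and use the codimension or intersection hypotheses to kill the coefficients. You merely spell out details the paper leaves implicit (the nonnegativity of the pairing, the trace-measure argument, and the disposal of lower-dimensional components of $V$ in the codimension-one case), which is fine.
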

\begin{proof}
For any $d$-closed positive $(1,1)$-current $T$, if $[\Phi]\cdot[T]=0$, then the support of $T$ is contained in $V$. Thus by the support theorem of positive currents, we get
$$ T=\sum_{i}c_{i}[V_{i}]$$
with $c_{i}$ are nonnegative constant. And the codimension an intersection conditions implies $c_{i}=0$, i.e., $T=0$. Then an application of proposition \ref{sgdual} guarantees $[\Phi]$ is a SG class.
\end{proof}

\begin{remark}
Indeed, we have proved a cone duality $\mathcal{E}_{dd^{c}}^{\vee}= \overline{\mathcal{B}}$ for balanced cone, where $\mathcal{B}$ is the balanced cone in the Bott-Chern cohomology generated by all balanced metrics, $\mathcal{E}_{dd^{c}}$ is a closed convex cone in the Aeppli cohomology generated by $dd^{c}$-closed positive $(1,1)$-currents. For the details, see \cite{FX12}.
\end{remark}
Intuitively, corrollary \ref{positive} suggests that SG metrics are not sensitive to singularities with codimension greater than one. Note that the center of proper modification is always a subvariety with codim$\geq 2$. Now we can prove the following theorem.
\begin{theorem}
\label{push}
Assume $\mu: \widetilde{X}\rightarrow X$ is a proper modification between two compact strongly Gauduchon manifolds, i.e., $\mu: \widetilde{X}\backslash E\rightarrow X\backslash Y$ is a biholomorphism with $E=\mu^{-1}(Y)$, where $Y$ is a subvariety of $X$ with codim$\geq 2$. Then the push-forward operator induces a surjective map $$\mu_{*}:\mathcal{SG}(\widetilde{X})\rightarrow \mathcal{SG}(X).$$
\end{theorem}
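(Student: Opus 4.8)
The plan is to reduce the statement about $\mu_*$ to the cone duality of Proposition~\ref{sgdual} together with the degree-codimension insensitivity of SG metrics encoded in Corollary~\ref{positive}. First I would show that $\mu_*$ is well-defined from $\mathcal{SG}(\widetilde{X})$ into $\mathcal{SG}(X)$: given a SG metric $\widetilde\Omega$ on $\widetilde X$, i.e.\ a $d$-closed real $(2n-2)$-form with $\widetilde\Omega^{n-1,n-1}>0$, its push-forward $\mu_*\widetilde\Omega$ is an a priori only a $d$-closed positive $(n-1,n-1)$-current on $X$ (push-forward commutes with $d$ and preserves positivity). Away from $Y$, however, $\mu$ is a biholomorphism, so $\mu_*\widetilde\Omega$ restricts to a smooth strictly positive $(n-1,n-1)$-form on $X\setminus Y$. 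One then checks (this is the one genuinely technical point, but it is already implicit in Theorem~1.1 of \cite{Pop10}) that the cohomology class $[\mu_*\widetilde\Omega]\in H^{2n-2}(X,\mathbb{R})$ contains a genuine smooth SG metric: indeed by Theorem~1.1 $X$ is SG, fix any smooth SG metric $\Omega_0$ on $X$; for $\varepsilon>0$ small the class $[\mu_*\widetilde\Omega]$ is represented by a smooth $d$-closed form whose $(n-1,n-1)$-part is positive outside $Y$, and since $\mathrm{codim}\, Y\ge 2$, Corollary~\ref{positive} (with $V=Y$, the no intersection-condition case) shows $[\mu_*\widetilde\Omega]\in\mathcal{SG}(X)$. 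So $\mu_*$ maps $\mathcal{SG}(\widetilde X)$ into $\mathcal{SG}(X)$.

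For surjectivity I would argue dually. Take any SG class $\alpha\in\mathcal{SG}(X)$, represented by a smooth SG metric $\Omega$ on $X$. I want $\widetilde\alpha\in\mathcal{SG}(\widetilde X)$ with $\mu_*\widetilde\alpha=\alpha$. A natural candidate is to start from the pulled-back class $\mu^*\alpha\in H^{2n-2}(\widetilde X,\mathbb{R})$ — note $\mu^*$ is well-defined on de Rham cohomology even though pull-back of currents is not, and the projection formula gives $\mu_*\mu^*\alpha=\alpha$ since $\mu$ has degree one. The issue is that $\mu^*\Omega$ need not be a SG metric on $\widetilde X$: its $(n-1,n-1)$-part is positive on $\widetilde X\setminus E$ but only semipositive (it may degenerate) along the exceptional set $E$, which has codimension one in general. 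To fix this I would add a correction supported near $E$: since $\widetilde X$ is SG, pick a smooth SG metric $\widetilde\Omega_0$ on $\widetilde X$ and a smooth cutoff $\chi$ equal to $1$ near $E$ and supported in a small neighbourhood of $E$; the form $\mu^*\Omega+\varepsilon\,\widetilde\Omega_0$ is $d$-closed and, for any $\varepsilon>0$, strictly positive everywhere, hence a SG metric — but its class is $\mu^*\alpha+\varepsilon[\widetilde\Omega_0]$, so $\mu_*$ of it is $\alpha+\varepsilon\mu_*[\widetilde\Omega_0]$, not $\alpha$.

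Thus the real content of surjectivity is to show $\mu^*\alpha$ itself lies in $\overline{\mathcal{SG}(\widetilde X)}$, and in fact in $\mathcal{SG}(\widetilde X)$; I would establish this via Proposition~\ref{sgdual} applied on $\widetilde X$. So let $\widetilde T$ be any $d$-closed positive $(1,1)$-current on $\widetilde X$; I must show $\mu^*\alpha\cdot[\widetilde T]\ge 0$, with equality only if $\widetilde T=0$. Write $[\widetilde T]=\mu^*\mu_*[\widetilde T]+\beta$ where $\beta$ is the "exceptional" part, i.e.\ $\beta$ lies in the subspace generated by the classes $[E_i]$ of the irreducible components of $E$ (this is the standard decomposition of $H^2(\widetilde X)$ under a modification; it can be made precise because $\mu_*[\widetilde T]$ is a positive $(1,1)$-current on $X$ and $\mu^*\mu_*\widetilde T-\widetilde T$ is supported on $E$, hence a combination of the $[E_i]$ by the support theorem). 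Then $\mu^*\alpha\cdot[\widetilde T]=\alpha\cdot\mu_*[\widetilde T]+\mu^*\alpha\cdot\beta$; the first term is $\ge 0$ because $\alpha\in\mathcal{SG}(X)$ and $\mu_*\widetilde T$ is $d$-closed positive, and the second term vanishes because $\mu^*\alpha\cdot[E_i]=\alpha\cdot\mu_*[E_i]=0$ ($\mu_*[E_i]=0$ since $\dim\mu(E_i)<\dim E_i$ forces the push-forward of this current to vanish by the support theorem, using $\mathrm{codim}\,Y\ge 2$). For the equality case, $\mu^*\alpha\cdot[\widetilde T]=0$ forces $\alpha\cdot\mu_*[\widetilde T]=0$, so by Proposition~\ref{sgdual} on $X$ we get $\mu_*\widetilde T=0$; but then $\widetilde T$ is $d$-exact and supported on $E$ after subtracting $0$, so $\widetilde T$ is a combination $\sum c_i[E_i]$ with $c_i\ge 0$, and $d$-exactness together with $\widetilde X$ being SG forces $\widetilde T=0$. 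By Proposition~\ref{sgdual}, $\mu^*\alpha$ is a SG class, and $\mu_*(\mu^*\alpha)=\alpha$ proves surjectivity. The main obstacle is making the cohomological decomposition $[\widetilde T]=\mu^*\mu_*[\widetilde T]+\sum c_i[E_i]$ rigorous at the level of currents and checking that $\mu_*[\widetilde T]$ is genuinely represented by a $d$-closed positive current so that Proposition~\ref{sgdual} on $X$ applies — everything else is bookkeeping with the projection formula and the support theorem.
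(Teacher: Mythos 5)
Your surjectivity argument has a genuine gap: the class $\mu^{*}\alpha$ is in general \emph{not} a strongly Gauduchon class on $\widetilde{X}$, only a boundary class of $\overline{\mathcal{SG}(\widetilde{X})}$, so the duality criterion of Proposition \ref{sgdual} cannot be verified for it. Concretely, since $\widetilde{X}$ is smooth the exceptional set $E$ contains divisorial components $E_{i}$, and the current $\widetilde{T}=[E_{i}]$ is $d$-closed, positive and nonzero, yet $\mu^{*}\alpha\cdot[E_{i}]=\alpha\cdot\mu_{*}[E_{i}]=0$ because $\mu_{*}[E_{i}]$ is a closed positive $(1,1)$-current supported on $Y$ with $\mathrm{codim}\,Y\geq 2$, hence zero by the support theorem. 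This is exactly the equality case you need to rule out, and your step ``then $\widetilde{T}$ is $d$-exact'' has no justification: from $\mu_{*}\widetilde{T}=0$ you only get $\mathrm{supp}\,\widetilde{T}\subseteq E$ and $\widetilde{T}=\sum c_{i}[E_{i}]$, and nothing forces this class to vanish (think of the blow-up of a point, where $[E]\neq 0$ in $H^{2}$). So the strategy ``pull back the class and apply Proposition \ref{sgdual}'' fails for the same reason that the pullback of a K\"ahler class under a blow-up is nef but not K\"ahler; the perturbation $\mu^{*}\Omega+\varepsilon\widetilde{\Omega}_{0}$ you considered and discarded is in fact close to the right idea, except that the correction term must be chosen so that its push-forward is $d$-exact on $X$. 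This is what the paper does: by Varouchas's lemma there is a neighborhood $V$ of $Y$ with $H^{2n-2}(V,\mathbb{R})=0$, so $\mu_{*}\widetilde{\Omega}=dR$ on $V$; using a cut-off one builds a global smooth $d$-closed form $\widetilde{\chi}$ on $\widetilde{X}$ that equals $\widetilde{\Omega}$ near $E$ and satisfies $\mu_{*}\widetilde{\chi}=d\Theta$ globally, and then $\widetilde{\Omega}'=\mu^{*}\Omega+\varepsilon\widetilde{\chi}$ is an SG metric with $[\mu_{*}\widetilde{\Omega}']=[\Omega]$. That gluing construction is the missing ingredient in your proposal.

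A smaller point on the easy inclusion $\mu_{*}\mathcal{SG}(\widetilde{X})\subseteq\mathcal{SG}(X)$: you invoke Corollary \ref{positive}, which requires a \emph{smooth} $d$-closed representative of $[\mu_{*}\widetilde{\Omega}]$ whose $(n-1,n-1)$-part is positive outside $Y$, and you have not produced one (an arbitrary smooth representative of the class need not retain positivity off $Y$). The clean route, which is the paper's, avoids this entirely: pair directly with currents, $[\mu_{*}\widetilde{\Omega}]\cdot[T]=[\widetilde{\Omega}]\cdot[\mu^{*}T]\geq 0$ (the pullback $\mu^{*}T$ is well defined for closed positive $(1,1)$-currents via local potentials), and equality forces $\mu^{*}T=0$, hence $\mathrm{supp}\,T\subseteq Y$ and $T=0$ by the support theorem, so Proposition \ref{sgdual} applies.
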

Before giving the proof of theorem \ref{push} , let's give some explanation of the push-forward operator $\mu_{*}$. Since $\mu$ is a proper map, for any form or current $\widetilde{\Psi}$ on $\widetilde{X}$, its direct image $\mu_{*}\widetilde{\Psi}$ is a well defined current on $X$. In particular, $\mu_{*}$ communicates with the operator $d$, i.e., $d\mu_{*}=\mu_{*}d$, thus it induces a well defined linear map on de Rham cohomology groups:
$$\mu_{*}:H^{\bullet}(\widetilde{X},\mathbb{R})\rightarrow  H^{\bullet}(X,\mathbb{R}).$$
Now since $\mathcal{SG}(\widetilde{X})$ is in $H^{2n-2}(\widetilde{X},\mathbb{R})$, theorem \ref{push} means that the image of $\mathcal{SG}(\widetilde{X})$ coincides $\mathcal{SG}(X)$ which is contained in $H^{2n-2}(X,\mathbb{R})$.

Now we can begin our proof.
\begin{proof}
Firstly, its easy to see $\mu_{*}\mathcal{SG}(\widetilde{X})$ is contained in $\mathcal{SG}(X)$, which is an application of proposition \ref{sgdual}. Since the
pullback of $d$-closed positive $(1,1)$-currents by holomorphic map is well
defined, for any given SG metric $\widetilde{\Omega}$ on $\widetilde{X}$ and any $d$-closed positive $(1,1)$-current $T$ on $X$, the wedge product $\mu_{*}\widetilde{\Omega}\wedge T$ is well defined. Indeed, we have
$$\int_{X}\mu_{*}\widetilde{\Omega}\wedge T=\int_{\widetilde{X}} \widetilde{\Omega}\wedge \mu^{*}T.$$
In particular, we have the corresponding identity for cup product:
$$[\mu_{*}\widetilde{\Omega}]\cdot[T]=[\widetilde{\Omega}]\cdot[\mu^{*}T]\geq 0 ,$$
and the strict positivity of $\widetilde{\Omega}^{n-1,n-1}$ implies that, if $[\mu_{*}\widetilde{\Omega}]\cdot[T]=0$ then $\mu^{*}T=0$. Thus the support of $T$ is contained in $V$, and codim$V\geq 2$ yields $T=0$. From proposition \ref{sgdual}, we get $\mu_{*}\widetilde{\Omega}\in \mathcal{SG}(X)$.\\

In order to establish the other inclusion, we follows the argument of \cite{AB93}. Indeed, our case is easier. We need the following useful lemma (\cite{Var86}, 2.6) concerning the topology of subvarieties.
\begin{lemma}
Let $X$ be a compact complex manifold and let $Y$ be a subvariety
of dimension $m$, then there exist an open neighborhood $V$ of $Y$ such
that $H^{q}(V, \mathbb{C})=0$ for $q > 2m$.
\end{lemma}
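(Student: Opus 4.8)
The plan is to produce $V$ as a regular neighborhood of $Y$ coming from a triangulation of $X$ adapted to $Y$, which reduces the assertion to the elementary combinatorial fact that a finite simplicial complex of dimension $d$ has no cohomology in degrees $>d$.

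First I would invoke the triangulability of complex analytic spaces. A closed complex analytic subset of $X$ is in particular semianalytic, so {\L}ojasiewicz's triangulation theorem furnishes a homeomorphism $h\colon |K|\xrightarrow{\ \sim\ }X$ from the geometric realization of a simplicial complex $K$ — finite, since $X$ is compact — such that $Y=h(|L|)$ for some subcomplex $L\subseteq K$. Because the local real dimension of an analytic set is twice its complex dimension at a smooth point, and $\dim_{\mathbb{C}}Y=m$, the complex $L$ has dimension $2m$.

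Next, passing to the second barycentric subdivision so that (the subcomplex corresponding to) $L$ is full, I would take $V$ to be the image under $h$ of the open star of $L$, i.e. the union of the open stars of the vertices lying in $L$. By standard piecewise-linear topology this $V$ is open in $X$, contains $Y$, and deformation retracts onto $Y$; iterating barycentric subdivisions produces, if desired, a whole fundamental system of such neighborhoods. The deformation retraction gives $H^q(V,\mathbb{C})\cong H^q(Y,\mathbb{C})\cong H^q(L,\mathbb{C})$, and the simplicial cochain complex of the finite complex $L$ vanishes in degrees $>2m$, so $H^q(V,\mathbb{C})=0$ for $q>2m$, as claimed.

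The only genuinely non-elementary input is the triangulation theorem, together with the fact that the triangulation can be taken compatible with the closed subset $Y$; granting it, the remaining dimension count is immediate. An alternative that avoids triangulating the pair is to equip $X$ with a real-analytic Riemannian metric, set $\rho=\operatorname{dist}(\cdot,Y)^2$, and use the {\L}ojasiewicz gradient inequality to see that $\rho$ has no critical value in a punctured interval $(0,\varepsilon)$; the downward gradient flow of $\rho$ then deformation-retracts $\{\rho<\varepsilon\}$ onto $Y$, after which one still needs the bound $2m$ on the cohomological dimension of $Y$ — again most cheaply extracted from a triangulation of $Y$ alone. I would take the triangulation argument as the main line, so the principal obstacle is organizing the neighborhood construction, not the cohomological estimate.
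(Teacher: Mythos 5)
Your argument is correct, but be aware that the paper itself offers no proof of this lemma: it is quoted as a black box from Varouchas [Var86, 2.6] inside the proof of Theorem 3.2, so there is no in-paper argument to compare against, and what you have written supplies a proof the paper only cites. Your route is the standard one and is sound: since $Y$ is a closed analytic (hence semianalytic) subset of the compact manifold $X$, {\L}ojasiewicz's triangulation theorem gives a finite complex $K$ triangulating $X$ compatibly with $Y$, i.e.\ with $Y=h(|L|)$ for a subcomplex $L$; the dimension of $L$ equals the covering dimension of $Y$, which is $2m$ because the smooth locus is a $2m$-manifold and the singular locus has strictly smaller dimension; and after subdividing so that $L$ is full, the open star neighborhood of $L$ deformation retracts onto $|L|$ by the usual barycentric-coordinate straight-line retraction (fullness is exactly what makes this work, and you correctly arrange it). The identification $H^{q}(V,\mathbb{C})\cong H^{q}(Y,\mathbb{C})$ plus vanishing of simplicial cochains above degree $2m$ finishes the argument. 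The key non-elementary input, as you say, is that the triangulation can be chosen compatible with $Y$ (not mere triangulability of $X$); granting that, your proof is complete, and it even yields the stronger statement that $Y$ admits a fundamental system of such neighborhoods, which is in the spirit of Varouchas' original formulation.
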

In our case, since $m<n-1$, what we need is that there exist an open neighborhood $V$ of $Y$ such
that $$H^{2n-2}(V, \mathbb{C})=0.$$
Now let's fix such an open neighborhood $V$. For any SG metric $\Omega$ on $X$ and $\widetilde{\Omega}$ on $\widetilde{X}$, since $H^{2n-2}(V, \mathbb{R})=0$, we have
$$\mu_{*}\widetilde{\Omega}|_{V}=dR $$
for some current $R$ defined on $V$. Thus $\mu_{*}\widetilde{\Omega}$ is also $d$-exact on $V\backslash Y$. Moreover, since $\mu$ is biholomorphism outside $E$, $\mu_{*}\widetilde{\Omega}$ is indeed smooth on $V\backslash Y$.
This implies there exists a smooth form $\beta$ on
 $V\backslash Y$ such that
 $$\mu_{*}\widetilde{\Omega}|_{V\backslash Y}=d\beta.$$
We get $d(R-\beta)=0$ on $V\backslash Y$, thus
$R-\beta= \gamma+d\Upsilon $
on $V\backslash Y$ for some real smooth $(2n-3)$-form $\gamma$ and current $\Upsilon$.\\

We choose an open subset $W$, such $Y\subset W\Subset V$ and a cut-off function $\rho \in C_{0}^{\infty}(V)$ such that $\rho\equiv 1$ on $W$, then
$$D:=\rho(\beta+\gamma)+d(\rho\Upsilon) $$
is a well defined current on $X\backslash Y$. However, since $\beta,\gamma$ are smooth, after applying the exterior operator $d$, we get
$$ dD=d(\rho(\beta+\gamma))$$
which is a smooth real $(2n-2)$-form on $X\backslash Y$. Then its pull back by $\mu|^{*}_{\widetilde{X}\backslash E}$, which we denote by
$$\chi=\mu|^{*}_{\widetilde{X}\backslash E}dD $$
is a smooth $(2n-2)$-form on $\widetilde{X}\backslash E$. Moreover, since $\rho\equiv 1$ on $W$, we have
$$dD=d(\rho(\beta+\gamma))=d(\beta+\gamma)=dR=\mu_{*}\widetilde{\Omega} $$
on $W\backslash Y$. So on the set $\mu^{-1}(W\backslash Y)=\mu^{-1}(W)\backslash E$, we have
$$\chi|_{\mu^{-1}(W)\backslash E}=\mu|^{*}_{\mu^{-1}(W)\backslash E}(\mu_{*}\widetilde{\Omega})= \widetilde{\Omega}.$$
This implies $\chi$ and $\widetilde{\Omega}$ coincide on $\mu^{-1}(W)\backslash E$, then we can glue $\chi$ and $\widetilde{\Omega}$ together to define a global form $\widetilde{\chi}$ as following
\begin{equation*}
\widetilde{\chi}=
\begin{cases}
\chi & \text{on}\ \widetilde{X}\backslash E\\
\widetilde{\Omega} & \text{on}\ \mu^{-1}(W).
\end{cases}
\end{equation*}
Then $\widetilde{\Omega}':= \mu^{*}\Omega+ \varepsilon \widetilde{\chi}$ is a SG metric on $\widetilde{X}$ if we take $0<\varepsilon\ll 1$. Now we claim $\widetilde{\Omega}'$ satisfies
$$ [\mu_{*}\widetilde{\Omega}']=[\Omega]$$
in $H^{2n-2}(X,\mathbb{R})$. For by the definition of $\widetilde{\chi}$, we have
\begin{equation*}
\mu_{*}\widetilde{\chi}=
\begin{cases}
\mu_{*}\chi=dD & \text{on}\ X\backslash Y\\
\mu_{*}\widetilde{\Omega}=dR & \text{on}\ W.
\end{cases}
\end{equation*}
Since $D=\beta+\gamma+d\Upsilon=R$ on $W\backslash Y$, we can glue $D$ and $R$ together and define a global current on $X$ as follows
\begin{equation*}
\Theta=
\begin{cases}
D & \text{on}\ X\backslash Y\\
R & \text{on}\ W.
\end{cases}
\end{equation*}
Then we have $\mu_{*}\widetilde{\chi}=d\Theta$, and this yields
$$ \mu_{*}\widetilde{\Omega}'=\mu_{*}(\mu^{*}\Omega)+\varepsilon \mu_{*}\widetilde{\chi}=\Omega+d\Theta.$$
Now we can conclude that $\mathcal{SG}(X)\subseteq \mu_{*}\mathcal{SG}(\widetilde{X})$ by the arbitrary of $\Omega$. Combining the previous result, we finish the proof of $\mathcal{SG}(X)= \mu_{*}\mathcal{SG}(\widetilde{X})$.

\end{proof}
\begin{remark}
Let's explain theorem \ref{push} a little more. A priori, the push-forward $\mu_{*}\widetilde{\Omega}$ has analytic singularities along the center $Y$. Theorem \ref{push} means that since the singularities are of codimension greater than one, we can always eliminate such singularities by a $d$-exact current, i.e., we can find a current $S$ such that $\mu_{*}\widetilde{\Omega}+dS$ is a strongly Gauduchon metric.
\end{remark}
\begin{remark}
Indeed, theorem \ref{push} is a byproduct when we studied the balanced cone $\mathcal{B}$. Since balanced manifolds are also stable under proper modifications, we initially wanted to prove $\mu_{*}\mathcal{B}(\widetilde{X})=\mathcal{B}(X)$. However, we encounter some difficulties when we wanted to define the wedge product
$$\mu_{*}\widetilde{\Omega}\wedge T,$$
where $\widetilde{\Omega}$ is a balanced metric on $\widetilde{X}$ and $T$ is just a $dd^{c}$-closed positive current. We can deal with an easy case in which $\mu$ is a blow-down map whose center($\mu$) is of dimension zero. On the other hand, this problem can be solved by the other points of view-Demailly's transcendental holomorphic Morse inequalities, see \cite{BDPP12}. Thus, if $\mu: \widetilde{X}\rightarrow X$ is a proper modification between two compact K$\ddot{a}$hler manifolds with the Picard number $\rho(X)=h^{1,1}(X)$, then $\mu_{*}\mathcal{B}(\widetilde{X})=\mathcal{B}(X).$
We want to settle the general case in a future work.\\
\end{remark}

For a compact K$\ddot{a}$hler surface, we observe that every SG metric can be deformed to be a K$\ddot{a}$hler metric.
\begin{proposition}
\label{kahler sg}
Assume $X$ is a compact K$\ddot{a}$hler surface and $\omega$ is an arbitrary hermitian metric, then there exist a smooth function $u$ and a smooth $(1,0)$-form $\theta$ such that
$e^{u}\omega+\partial\bar \theta+ \bar\partial \theta $
is a K$\ddot{a}$hler metric.
\end{proposition}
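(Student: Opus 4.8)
The plan is to pin down the conformal factor first, then correct the closedness by a $\partial\bar\partial$-lemma, and finally correct the positivity by a Kähler-cone argument; only the last step uses in an essential way that $X$ is a surface. First note that the conformal factor is essentially forced: if $\eta:=e^{u}\omega+\partial\bar\theta+\bar\partial\theta$ were closed, then the vanishing of the $(2,1)$-component of $d\eta$, namely $\partial(e^{u}\omega)+\partial\bar\partial\theta=0$, yields upon applying $\bar\partial$ that $\bar\partial\partial(e^{u}\omega)=0$, i.e.\ $e^{u}\omega$ is a Gauduchon metric. So I take $u$ to be the Gauduchon conformal factor of $\omega$, which exists by Gauduchon's theorem, and set $\omega_{G}:=e^{u}\omega$, so that $\partial\bar\partial\omega_{G}=0$.

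Next I would produce $\theta$ so that the form becomes closed. Since $\partial\bar\partial\omega_{G}=0$, the $(2,1)$-form $\partial\omega_{G}$ is $d$-closed (indeed $d\partial\omega_{G}=\bar\partial\partial\omega_{G}=-\partial\bar\partial\omega_{G}=0$) and it is trivially $\partial$-exact; as $X$ is Kähler it satisfies the $\partial\bar\partial$-lemma, so $\partial\omega_{G}=\partial\bar\partial\gamma$ for a smooth $(1,0)$-form $\gamma$. Put $\theta_{0}:=-\gamma$ and $\alpha_{0}:=\omega_{G}+\partial\bar\theta_{0}+\bar\partial\theta_{0}=\omega_{G}-\partial\bar\gamma-\bar\partial\gamma$; using $\partial\bar\partial\gamma=\partial\omega_{G}$ and its conjugate $\bar\partial\partial\bar\gamma=\bar\partial\omega_{G}$ one checks directly that $d\alpha_{0}=0$, so $\alpha_{0}$ is a $d$-closed real $(1,1)$-form. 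I would also record that $\omega_{G}$ is then a strongly Gauduchon metric, $\partial\omega_{G}=-\bar\partial(\partial\gamma)$, so that with $\beta:=-\partial\gamma$ the real $2$-form $\Omega:=\omega_{G}-\beta-\bar\beta$ is $d$-closed, and a short computation gives $\Omega-\alpha_{0}=d(\gamma+\bar\gamma)$, hence $[\alpha_{0}]=[\Omega]$ in $H^{2}(X,\mathbb{R})$.

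The crux is to show that $[\alpha_{0}]$ is a Kähler class — this is the step I expect to be the main obstacle, and it is where being on a surface is genuinely used. Since $[\alpha_{0}]$ is represented by a closed $(1,1)$-form it lies in $H^{1,1}(X,\mathbb{R})$, and I would verify the numerical criteria using the representative $\Omega$, exploiting that $(2,0)$- and $(0,2)$-forms contribute nothing to $\int_{X}\Omega\wedge\Omega$, to $\int_{X}\Omega\wedge\omega_{K}$, or to $\int_{C}\Omega$ for bidegree reasons (on a surface, and restricted to a curve). This gives $[\alpha_{0}]^{2}=\int_{X}\omega_{G}^{2}+2\int_{X}\beta\wedge\bar\beta>0$ (both terms are nonnegative, the first strictly positive), $[\alpha_{0}]\cdot[\omega_{K}]=\int_{X}\omega_{G}\wedge\omega_{K}>0$ for any Kähler form $\omega_{K}$ on $X$, and $[\alpha_{0}]\cdot[C]=\int_{C}\omega_{G}>0$ for every irreducible curve $C\subset X$. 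By the Hodge index theorem the intersection form on $H^{1,1}(X,\mathbb{R})$ is Lorentzian, so $[\alpha_{0}]^{2}>0$ together with $[\alpha_{0}]\cdot[\omega_{K}]>0$ forces $[\alpha_{0}]$ into the connected component of the positive cone containing the Kähler classes; combined with the positivity on all curves and the characterization of the Kähler cone of a compact Kähler surface (\cite{Lam99}), $[\alpha_{0}]$ is a Kähler class.

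Finally I would conclude. Since $[\alpha_{0}]$ is a Kähler class it contains a Kähler form $\widetilde\omega$, and then $\widetilde\omega-\alpha_{0}$ is a $d$-exact real form of type $(1,1)$, so the $\partial\bar\partial$-lemma gives $\widetilde\omega-\alpha_{0}=i\partial\bar\partial\phi$ for a real function $\phi$. Setting $\theta':=-\tfrac{i}{2}\partial\phi$ one has $\partial\bar\theta'+\bar\partial\theta'=i\partial\bar\partial\phi$, and with $\theta:=\theta_{0}+\theta'$ we obtain $e^{u}\omega+\partial\bar\theta+\bar\partial\theta=\alpha_{0}+i\partial\bar\partial\phi=\widetilde\omega$, a Kähler metric. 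Apart from the Kähler-cone input, all the steps are formal verifications; the only real work is the cone argument in the previous paragraph.
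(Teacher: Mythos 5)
Your proof is correct, and its first two steps are exactly the paper's: choose the Gauduchon conformal factor $u$, then use the $\partial\bar\partial$-lemma to correct $e^{u}\omega$ to a $d$-closed real $(1,1)$-form of the shape $e^{u}\omega+\partial\bar\theta_{0}+\bar\partial\theta_{0}$ (you merely make explicit the potential $\gamma$ with $\partial\omega_{G}=\partial\bar\partial\gamma$, and the final reduction of a $d$-exact $(1,1)$-correction to $\partial\bar\theta'+\bar\partial\theta'$, both of which the paper leaves implicit). Where you genuinely diverge is the key step that $[\alpha_{0}]$ is a K\"ahler class. The paper gets this in one line from the cone duality $\mathcal{K}^{\vee}=\mathcal{N}$ of \cite{BDPP12}: since closed positive $(1,1)$-currents kill the $\partial\bar\beta+\bar\partial\beta$ and $(2,0)+(0,2)$ parts for degree reasons, the class pairs strictly positively with every nonzero closed positive current, hence lies in the interior of $\mathcal{N}^{\vee}$, i.e.\ in $\mathcal{K}$. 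You instead verify the numerical Nakai--Moishezon data -- $[\alpha_{0}]^{2}=\int\omega_{G}^{2}+2\int\beta\wedge\bar\beta>0$ using the pointwise nonnegativity of $\beta\wedge\bar\beta$ for a $(2,0)$-form on a surface, $[\alpha_{0}]\cdot[\omega_{K}]>0$, and $[\alpha_{0}]\cdot[C]>0$ for all curves -- and then invoke the Hodge index theorem together with the Buchdahl--Lamari characterization of the K\"ahler cone of a compact K\"ahler surface; note that this criterion is really in Buchdahl's paper and Lamari's ``Le c\^one k\"ahl\'erien d'une surface'' rather than the \cite{Lam99} listed in this bibliography, and its proof is itself a Hahn--Banach/current-duality argument, so the deep input is comparable in both routes. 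What each buys: the paper's duality argument is shorter and is not tied to dimension two (it is exactly what lets the paper state, right after the proposition, that SKT metrics on K\"ahler manifolds of any dimension can be deformed to K\"ahler ones), whereas your argument is more explicit and self-contained at the level of the verification and stays within classical surface theory, but the Hodge-index step makes it intrinsically two-dimensional.
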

\begin{proof}
Firstly, we can choose a $u$ such that $\partial\bar\partial e^{u}\omega=0$, i.e., $e^{u}\omega$ is a Gauduchon metric. And $\partial\bar\partial$-lemma implies we can choose a smooth $(1,0)$-form $\beta$ such that
$$d(e^{u}\omega+\partial\bar \beta+ \bar\partial \beta)=0.$$

We claim that $[e^{u}\omega+\partial\bar \beta+ \bar\partial \beta]\in H^{1,1}(X,\mathbb{R})$ is a K$\ddot{a}$hler class.

This relies on the cone duality $\mathcal{K}^{\vee}=\mathcal{N}$ in \cite{BDPP12}, where $\mathcal{K}$ is the K$\ddot{a}$hler cone of $X$ and $\mathcal{N}\subseteq H^{1,1}(X,\mathbb{R})$ is the cone generated by $d$-closed positive $(1,1)$-currents. Thus there exists a smooth $(1,0)$-form $\gamma$ such that $$e^{u}\omega+\partial \overline{(\beta+\gamma)}+ \bar\partial (\beta+\gamma)$$ is a K$\ddot{a}$hler metric.
\end{proof}
We conclude this section by giving two generalizations of the above proposition \ref{kahler sg} on compact K$\ddot{a}$hler manifolds of arbitrary dimension.

On one hand, we note that $\mathcal{K}^{\vee}=\mathcal{N}$ implies every SKT metric $\omega$, i.e., $\partial\bar\partial \omega=0$, can be deformed to a K$\ddot{a}$hler metric. We can always find a smooth $(1,0)$-form $\theta$ such that $\omega+\partial\bar \theta+ \bar\partial \theta $ is a K$\ddot{a}$hler metric. On the other hand, if $X$ is a compact K$\ddot{a}$hler manifold with the Picard number $\rho(X)=h^{1,1}(X)$, then every strongly Gauduchon metric $\omega$ can be deformed to be a balanced metric. We can always find a smooth $(n-1,n-2)$-form $\theta$ such that $\omega^{n-1}+\partial\bar \theta+ \bar\partial \theta $ is a balanced metric.

\section{Moduli of strongly Gauduchon manifolds}
In this section, we first discuss the small deformation of strongly Gauduchon manifolds and the variance of strongly Gauduchon cones.

Let $\pi: \mathcal{X}\rightarrow B$ be a holomorphic deformation of compact complex manifolds with base manifold $B$, then the topology of $X_{t}$ is locally constant, and therefore so are the cohomology groups $H^{k}(X_{t},\mathbb{R})$. Indeed, these cohomology groups $H^{k}(X_{t},\mathbb{R})$ constitute a vector bundle $R^{k}\pi_{*}\mathbb{R}_{\mathcal{X}}$ over $B$, where $\mathbb{R}_{\mathcal{X}}$ is the local constant sheaf over $\mathcal{X}$. Then $R^{k}\pi_{*}\mathbb{R}_{\mathcal{X}}$ is a flat bundle equipped with the well known Gauss-Manin connection $\nabla$.\\

\begin{proposition}
Let $\pi: \mathcal{X}\rightarrow B$ be a holomorphic deformation of compact complex manifolds with base manifold $B$ , then strongly Gauduchon manifolds are stable under small deformations. Moreover, if $\mathcal{X}$ is a family of strongly Gauduchon manifolds with with dim$ X_{t}=n$, the strongly Gauduchon cones $\mathcal{SG}_{t}\subseteq H^{2n-2}(X_{t},\mathbb{R})$ are invariant under the parallel transport with respect the Gauss-Manin connection $\nabla$ of the local system $R^{2n-2}\pi_{*}\mathbb{R}_{\mathcal{X}}$.
\end{proposition}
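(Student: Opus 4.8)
The plan is to treat the two assertions of the proposition — that the strongly Gauduchon property is open under holomorphic deformation, and that the family of cones $t\mapsto\mathcal{SG}_t$ is flat for the Gauss--Manin connection — by, respectively, directly transporting the defining $(2n-2)$-form and then combining such transport with the cone duality of Proposition~\ref{sgdual}. For the openness, I would fix a point $t_0\in B$, shrink $B$ to a small polydisc $\Delta$ around it, and use the $C^\infty$-local triviality of the family to write $\mathcal{X}|_{\Delta}\cong M\times\Delta$, where $M$ is the underlying smooth manifold and $t\mapsto J_t$ is a smooth family of complex structures with $J_{t_0}$ the given one. By the reformulation recalled in Section~1, an SG metric on $X_{t_0}$ is a $d$-closed real $(2n-2)$-form $\Omega$ on $M$ with $\Omega^{(n-1,n-1),J_{t_0}}>0$. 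The point is that, read on $M$, this same $\Omega$ remains $d$-closed on every $X_t$, while its component of bidegree $(n-1,n-1)$ \emph{with respect to $J_t$} depends smoothly on $t$; since $M$ is compact and strict positivity is an open condition, $\Omega^{(n-1,n-1),J_t}>0$ for $t$ near $t_0$, so $X_t$ is SG. I want to stress that this argument uses crucially that the SG condition tolerates the extra components of type $(n,n-2),(n-2,n),\dots$ that $\Omega$ acquires for $t\ne t_0$; this is exactly the feature distinguishing SG from the balanced property, which is not deformation-open.

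For the cone statement I would first recall that, in the trivialisation above, the canonical identifications $H^{2n-2}(X_t,\mathbb{R})\cong H^{2n-2}(M,\mathbb{R})$ \emph{are} the parallel transports of the Gauss--Manin connection of $R^{2n-2}\pi_*\mathbb{R}_{\mathcal X}$, whose flat sections are by definition the locally constant ones. Hence it suffices to prove that, viewed inside the fixed space $H^{2n-2}(M,\mathbb{R})$, the set $\mathcal{SG}_t=\{[\Omega]: [\Omega]\text{ has a }d\text{-closed representative }\Omega\text{ with }\Omega^{(n-1,n-1),J_t}>0\}$ does not depend on $t$; since we may assume $B$ connected, it is enough to show this for $t$ near each $t_0$. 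One inclusion is immediate from the openness step, packaged by compactness: any compact $K\subseteq\mathcal{SG}_{t_0}$ lies in a polytope with vertices in $\mathcal{SG}_{t_0}$, each vertex stays an SG class on a common neighbourhood of $t_0$ by the previous paragraph, and convexity of $\mathcal{SG}_t$ then gives $K\subseteq\mathcal{SG}_t$ there. In other words, the family of cones is lower semicontinuous at $t_0$.

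For the reverse inclusion I would pass to the dual description $\overline{\mathcal{SG}_t}=\mathcal{E}_d(X_t)^\vee$ furnished by Proposition~\ref{sgdual}, which reduces matters to the local constancy of the closed convex cone $\mathcal{E}_d(X_t)\subseteq H^2(M,\mathbb{R})$. Its ``upper'' half — that a limit of classes of $d$-closed positive $(1,1)$-currents on fibres near $t_0$ is again such a class on $X_{t_0}$ — should be straightforward via weak compactness: the transported SG form gives a single $(2n-2)$-form that is SG on all nearby $X_t$, so its constant class produces a uniform mass bound $\|T\|_{\text{mass}}\asymp[T]\cdot[\Omega]$, a weakly convergent subsequence exists, and its limit is $d$-closed, of type $(1,1)$ for $J_{t_0}$ (pair with the $J_{t_0}$-projection of a test form and use the mass bound) and positive for $J_{t_0}$ (pair with strictly positive $(n-1,n-1)$-forms, whose $J_t$-components remain positive, and let the regularising parameter tend to $0$). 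The genuinely delicate point — and the step I expect to be the main obstacle — is the complementary \emph{lower} semicontinuity of $\mathcal{E}_d$: that the class of a given $d$-closed positive $(1,1)$-current on $X_{t_0}$ is still realised by such a current on every nearby $X_t$. Positive $(1,1)$-currents, unlike SG metrics, have no reason to survive a change of complex structure, so a pure continuity argument cannot supply this; I would attempt it by replacing the current, inside its cohomology class, by a less singular representative through a regularisation of closed positive $(1,1)$-currents on $X_{t_0}$, transporting this representative to nearby fibres and correcting it there, and exploiting that on an SG manifold every $d$-exact positive $(1,1)$-current vanishes (Popovici's characterisation recalled in Section~1), so that the correction cannot change the class. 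Granting this, the two semicontinuities yield $\overline{\mathcal{SG}_t}=\overline{\mathcal{SG}_{t_0}}$ for $t$ near $t_0$, hence $\mathcal{SG}_t=\mathcal{SG}_{t_0}$ since each is the interior of its closure, and $\nabla$-invariance over all of $B$ then follows by connectedness.
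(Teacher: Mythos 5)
Your first half (deformation openness of the SG property) and your ``lower semicontinuity'' inclusion for the cones are essentially the paper's own argument: transport a fixed $d$-closed real $(2n-2)$-form through the $C^{\infty}$-trivialisation and use that strict positivity of the $(n-1,n-1)$-component with respect to $J_t$ is an open condition in $t$. The divergence, and the genuine gap, is the reverse inclusion. You reduce it, via Proposition~\ref{sgdual}, to the lower semicontinuity of $\mathcal{E}_d$, namely that the class of any $d$-closed positive $(1,1)$-current on $X_{t_0}$ is again the class of such a current on all nearby fibres, and for this you offer only a hoped-for regularisation-and-correction scheme, conceding it is the main obstacle. This step is not merely unproven; it fails in general, for a reason no regularisation can remove, since the obstruction is purely cohomological: if the de Rham class of $T$ acquires a nonzero $(0,2)$-component with respect to $J_t$, then no closed positive $(1,1)$-current on $X_t$ can represent it. Concretely, take a family of K3 surfaces (all K\"ahler, hence SG) whose central fibre contains a smooth rational curve $C$ and deform generically so that $[C]$ is not of type $(1,1)$ on $X_t$ for $t\neq 0$: then $[C]\in\mathcal{E}_d(X_0)$ but $[C]\notin\mathcal{E}_d(X_t)$. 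Correspondingly, every SG class on $X_0$ pairs strictly positively with $[C]$ (an SG form restricted to the curve $C$ contributes only its positive $(1,1)$-part), while on $X_t$ the closed forms $\omega_t+c\,\mathrm{Re}\,\sigma_t$, with $\omega_t$ K\"ahler, $\sigma_t$ the holomorphic $2$-form and $c$ a suitable real constant, are SG and their classes pair negatively with the flat class $[C]$. So the route through local constancy of $\mathcal{E}_d$ cannot be completed as proposed.

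For comparison, the paper does not invoke the duality here at all: it closes the argument ``by symmetry,'' running the same transport argument backwards from a $\nabla$-flat class in $\mathcal{SG}_{\gamma(s)}$ toward $\gamma(0)$, using that the quotient map from $d$-closed smooth $(2n-2)$-forms onto $H^{2n-2}$ is open to choose a representative Fr\'echet-close to a given SG form. Note, however, that this yields, for each individual class, an open set of parameters on which it remains SG --- the same non-uniform statement as your first inclusion --- rather than equality of the two cones at a fixed pair of nearby parameters; your instinct that the reverse inclusion requires a genuinely new ingredient is therefore sound, and the K3 example above suggests the difficulty is intrinsic to the statement rather than an artifact of your method.
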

\begin{proof}
Since $\pi: \mathcal{X}\rightarrow B$ is locally trivial, for any point $t_{0}\in B$, there exists a small neighborhood $U$ of $t_{0}$ and a $C^{\infty}$-trivialization $\Phi_{U}$:
$$ \Phi_{U}:\pi^{-1}(U)\rightarrow U\times X_{t_{0}},$$
and we can even choose such a $\Phi_{U}$ such that $\Phi_{U}(t_{0},\cdot)$ is the identity map of $X_{t_{0}}$.
We denote $\Phi_{U}(t,\cdot)$ by $\Phi_{t}$.
If $X_{t_{0}}$ is a SG manifold and $\Omega_{t_{0}}$ is a SG metric, then $\Omega_{t}:=\Phi_{t}^{*}\Omega_{t_{0}}$ satifies
\begin{align*}
d\Omega_{t}=\Phi_{t}^{*}d\Omega_{t_{0}}=0,\ \text{and}\ \Omega_{t}^{n-1,n-1}>0\ \text{for}\ t\ \text{close to}\ t_{0}
\end{align*}
since $\Phi_{t}\rightarrow \Phi_{t_{0}}$ as $t\rightarrow {t_{0}}$.
Therefore, $\Omega_{t}$ will be a SG metric on $X_{t}$.

The invariance of $\mathcal{SG}_{t}$ under the parallel transport with respect the Gauss-Manin connection $\nabla$ is almost obvious. Assume
$\gamma:[0,1]\rightarrow B $, $s\mapsto \gamma(s)$ is a smooth curve in $B$. Let $\alpha(s)\in H^{2n-2}(X_{\gamma(s)},\mathbb{R})$ be a family of real $(2n-2)$-cohomology classes along the curve $\gamma$ such that
\begin{align*}
\nabla_{\gamma'(s)}\alpha(s)=0,
\end{align*}
and the initial value $\alpha(0)=[\Omega_{0}]$ for some $[\Omega_{0}]\in \mathcal{SG}_{\gamma(0)}$. By solving the above ODE, we get an injection map $\alpha(0)\mapsto \alpha(s)$.

We claim that $\alpha(s)\in \mathcal{SG}_{\gamma(s)}$ for $s$ sufficiently small.

Since the fibres $H^{2n-2}(X_{\gamma(s)},\mathbb{R})$ are locally constant, the solution $\alpha(s)$ of the above equation can be seen in the same space $H^{2n-2}(X_{\gamma(0)},\mathbb{R})$. And by the smoothness depending on $s$ of $\alpha(s)$, for any norm $||\cdot||$ in $H^{2n-2}(X_{\gamma(0)},\mathbb{R})$, we have
$$\lim_{s\rightarrow 0}|| \alpha(s)-\alpha(0)||=0.$$
Note that the quotient map from the Fr$\acute{e}$chet space of $d$-closed smooth real $(2n-2)$-forms to $H^{2n-2}(X_{\gamma(0)},\mathbb{R})$ is surjective, thus open. Therefore, for any given $\varepsilon>0$, there exists a smooth representative $\theta_{s}\in \alpha(s)$ such that
$||\theta_{s}-\Omega_{0}||_{F}<\varepsilon$ for $s$ small enough where $||\cdot||_F$ is the Fr$\acute{e}$chet norm of differential forms. Thus $\theta_{s}$ is a strongly Gauduchon metric in $\alpha(s)$, i.e., $\alpha(s)\in \mathcal{SG}_{\gamma(s)}$.

Therefore, we get an injection map from $\mathcal{SG}_{\gamma(0)}$ to $\mathcal{SG}_{\gamma(s)}$ for $s$ small enough. And by symmetry, we conclude $\mathcal{SG}_{\gamma(s)}$ is parallel with respect to the Gauss-Manin connection $\nabla$.

\end{proof}

In the following, we will always assume the base manifold $B$ is a compact curve.

Let $\pi: \mathcal{X}\rightarrow C$ be a holomorphic map from a compact complex manifold $\mathcal{X}$ onto a compact complex curve $C$, then $\mathcal{X}$ can be sliced by $\pi'$s fibres, and each fibre (may be reducible) gives a $d$-closed positive $(1,1)$-current. If $\mathcal{X}$ is strongly Gauduchon, such currents can not be a boundary. Therefore, in order to get the inverse conclusion, we should invoke some topological restriction.
\begin{definition}
Let $\pi: \mathcal{X}\rightarrow C$ be a holomorphic map from a compact complex manifold $\mathcal{X}$ onto a compact complex curve $C$, $\pi$ is called topologically essential, if any positive linear combination of components of fibres is not a boundary.
\end{definition}
It's easy to see that if the fibres are irreducible, then $\pi$ is topologically essential if and only if $\pi^{*}\omega_{C}\neq 0$ in $H^{2}(\mathcal{X},\mathbb{R})$, where $\omega_{C}$ is the fundamental class of $C$, i.e,. $\int_{C}\omega_{C}=1$. Let $p\in C$ be a regular value of $\pi$ with fibre $F_{p}:=\pi^{-1}(p)$ and let $\delta_{p}$ be the Dirac measure centered at $p$. As $\delta_{p}$ is a $d$-closed positive current on $C$, its pull-back $\pi^{*}\delta_{p}$ is well defined. Moreover, $$\pi^{*}\delta_{p}=[F_{p}].$$
To see this, we only need to verify these two currents have the same local potential. Choose a local holomorphic coordinate $t$ around $p$ on $C$ with $t(p)=0$, then $F_{p}$ is locally the zero variety of $\pi^{*}t$. So locally we have
$$[F_{p}]=dd^{c}log|\pi^{*}t|^{2}=\pi^{*}\delta_{p}.$$
Therefore, $\pi^{*}\delta_{p}=[F_{p}]$. And since $[\pi^{*}\delta_{p}]=[\pi^{*}\omega_{C}]$, $\pi^{*}\omega_{C}\neq 0$ in $H^{2}(\mathcal{X},\mathbb{R})$ is equivalent to $F_{p}\notin Imd$. The other fibres which is not given by a regular value can always be approximated by the regular ones, thus we have the same conclusion.\\

Now we can state our theorem, see also \cite{Mich82}.
\begin{theorem}
Let $\pi: \mathcal{X}\rightarrow C$ be a topologically essential holomorphic map from a compact complex manifold $\mathcal{X}$ onto a compact complex curve $C$, if all the regular fibres are strongly Gauduchon manifolds, then $\mathcal{X}$ is also a strongly Gauduchon manifolds.
\end{theorem}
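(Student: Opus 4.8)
The plan is to follow Michelsohn's strategy for the balanced case and to exploit the intrinsic characterization quoted above: a compact complex manifold is strongly Gauduchon if and only if every $d$-exact positive $(1,1)$-current on it vanishes. So write $n=\dim_{\mathbb{C}}\mathcal{X}$, fix a volume form $\omega_C$ on $C$, and note that $\pi^{*}\omega_C$ is a smooth $d$-closed semipositive $(1,1)$-form degenerate exactly in the vertical directions. Fix a $d$-exact positive $(1,1)$-current $T=dS$ on $\mathcal{X}$; the whole burden is to show $T=0$.

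The first step is to manufacture a global closed auxiliary form. Let $\Sigma\subset C$ be the finite set of critical values. Cover $C\setminus\Sigma$ by small discs $U_i$ over which $\pi$ is $C^\infty$-trivial, $\pi^{-1}(U_i)\cong U_i\times F_i$ with $F_i$ a regular, hence strongly Gauduchon, fibre; choosing a strongly Gauduchon metric $\Omega_{F_i}$ on $F_i$ and pulling it back along $r_i:\pi^{-1}(U_i)\to F_i$, one gets a $d$-closed real $(2n-4)$-form $\Psi_i:=r_i^{*}\Omega_{F_i}$ whose restriction to each fibre $F_t$ over $U_i$ equals $\Phi_t^{*}\Omega_{F_i}$; shrinking $U_i$ and invoking the openness of the strongly Gauduchon condition, this restriction is again a strongly Gauduchon metric on $F_t$, so its $(n-2,n-2)$-part is transverse-positive along $F_t$. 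Take a partition of unity $\{\rho_j\}$ on $C$ with the $\rho_i$ ($i$ regular) subordinate to the $U_i$ and the bumps near $\Sigma$ chosen so that $\sum_{i\ \mathrm{reg}}\rho_i(p)>0$ for every regular $p$, and set $\Psi:=\sum_{i\ \mathrm{reg}}(\rho_i\circ\pi)\Psi_i$ and $\Theta:=\Psi\wedge\pi^{*}\omega_C$. Since $\rho_i\circ\pi$ and $\pi^{*}\omega_C$ are pulled back from the curve, $d(\rho_i\circ\pi)\wedge\pi^{*}\omega_C=0$ (a pulled-back $3$-form on $C$) and $d\Psi_i\wedge\pi^{*}\omega_C=0$, so $\Theta$ is a global smooth $d$-closed real $(2n-2)$-form. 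A local computation in fibred coordinates shows that $\Theta^{n-1,n-1}=\Psi^{n-2,n-2}\wedge\pi^{*}\omega_C$ is semipositive everywhere and, over the regular locus $\mathcal{U}:=\mathcal{X}\setminus\pi^{-1}(\Sigma)$, has kernel exactly the horizontal line, i.e.\ it is strictly positive in the vertical directions there.

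Now pair: $\int_{\mathcal{X}}T\wedge\Theta=[T]\cdot[\Theta]=0$ by Stokes (using $dT=0$, $d\Theta=0$), while $T\wedge\Theta=T\wedge\Theta^{n-1,n-1}\ge 0$ as a measure, since a positive $(1,1)$-current wedged with a semipositive, hence strongly positive, $(n-1,n-1)$-form is a positive measure. Hence $T\wedge\Theta^{n-1,n-1}=0$. Over $\mathcal{U}$, strict positivity of $\Theta^{n-1,n-1}$ in the fibre directions together with $T\ge 0$ forces the vertical block of $T$ to vanish; as a positive Hermitian form with a vanishing diagonal block has the corresponding off-diagonal blocks vanish too, $T|_{\mathcal{U}}=g\,\pi^{*}\omega_C$ for a nonnegative distribution $g$, and $dT=0$ makes $g$ constant along fibres, so $T|_{\mathcal{U}}=\pi^{*}\nu_0$ for a positive current $\nu_0$ on $C\setminus\Sigma$, which extends across $\Sigma$ to a positive current $\nu$ on $C$. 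Finally, applying Siu's decomposition to the closed positive current $T$ and using that $T$ agrees with the pullback of the diffuse part of $\nu$ off the critical fibres, one writes $T$ as a nonnegative combination of integration currents over components of critical fibres plus a nonnegative multiple of $\pi^{*}\delta_p$ for a regular $p$; passing to cohomology, $0=[T]$ is a nonnegative linear combination of classes of components of fibres of $\pi$, so topological essentialness forces all coefficients to vanish, i.e.\ $T=0$. By the criterion above, $\mathcal{X}$ is strongly Gauduchon. (An alternative to the last paragraph is a slicing argument: the $\pi$-slices of $T$ are $d$-exact positive $(1,1)$-currents on regular fibres, hence vanish, whence $T$ is a pullback from $C$; one then concludes as above.)

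The main obstacle is the construction and control of $\Theta$: one must glue the fibrewise strongly Gauduchon metrics into a genuinely $d$-closed global form whose $(n-1,n-1)$-component stays strictly positive in the fibre directions over the whole regular locus. This is delicate because the trivializations are only $C^\infty$, so positivity on a model fibre is not automatically inherited by neighbouring fibres with their own complex structures — openness of the strongly Gauduchon condition and a sufficiently fine cover are what make the patching work, and the pulled-back partition of unity is exactly what keeps $d\Theta=0$. The behaviour over the critical fibres, where $\Theta^{n-1,n-1}$ is allowed to degenerate, must then be absorbed by the support/Siu-decomposition step, which is precisely where the topological-essentialness hypothesis is consumed.
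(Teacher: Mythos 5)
Your proposal reaches the correct conclusion and its overall skeleton (reduce to the vanishing of every $d$-exact positive $(1,1)$-current $T=dS$; show $T$ is horizontal over the regular locus; write $T$ as a pullback plus components of critical fibres; kill the resulting exact nonnegative combination by topological essentialness) coincides with the paper's, but your central step is genuinely different. The paper never builds a global auxiliary form on $\mathcal{X}$: it fixes a regular value $p$, regularizes the Dirac mass $\delta_p$ by smooth measures $\omega_{\varepsilon}$, forms the exact positive $(2,2)$-currents $\pi^{*}\omega_{\varepsilon}\wedge T=d(\pi^{*}\omega_{\varepsilon}\wedge S)$, normalizes their masses, extracts a weak limit $T_{\infty}$ supported on and tangent to $F_p$, pushes it forward to $F_p$ by the trivialization retraction, and uses the \emph{dual} characterization of the SG property of the fibre ($d$-exact positive $(1,1)$-currents vanish) to force $T_{\infty}=0$, and then a mass-bound contradiction to get $T\wedge\pi^{*}\omega=0$ on the regular locus. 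You instead use the fibres' SG hypothesis in its \emph{metric} form: you glue pullbacks of fibrewise SG $(2n-4)$-forms with a partition of unity pulled back from $C$, exploit that any $3$-form on the curve vanishes so that wedging with $\pi^{*}\omega_C$ keeps the glued form $d$-closed, and pair the resulting closed $(2n-2)$-form $\Theta$ with $T$ by Stokes to get $T\wedge\Theta^{n-1,n-1}=0$, whence horizontality of $T$ where $\Theta$ is fibrewise strictly positive. This is closer in spirit to Michelsohn's original construction for balanced metrics; it is more direct and avoids the weak-compactness/normalization analysis, at the price of the patching and positivity bookkeeping you flag. From that point on (vanishing of the vertical block by positivity, $T=\pi^{*}\nu$ on the regular locus, trivial extension of $\nu$ across the critical values, support theorem/Siu over the critical fibres, cohomological identification with a multiple of the fibre class, and topological essentialness) your argument and the paper's are the same.

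One technical caveat, easily repaired but worth fixing: you ask for a single partition of unity on $C$, locally finite on $C$, with each regular disc $U_i$ small enough for the SG-openness of the transported fibre metric \emph{and} with $\sum_{i\ \mathrm{reg}}\rho_i(p)>0$ at every regular $p$. Near a critical value the admissible radius for the openness argument may shrink to zero, so such a cover need not exist as stated. The standard fix is an exhaustion: for each $\varepsilon>0$ cover the compact set $\{p:\ d(p,\Sigma)\ge\varepsilon\}$ by finitely many small admissible discs, complete the cover of $C$ by the $\varepsilon$-discs around $\Sigma$, and run your pairing argument with the resulting $\Theta_{\varepsilon}$; this gives horizontality of $T$ over $\pi^{-1}(\{d(\cdot,\Sigma)\ge\varepsilon\})$ for every $\varepsilon$, hence over the whole regular locus, which is all the later steps need.
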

\begin{proof}
We shall prove that all the positive $(1,1)$-currents $T=dS$ vanish on $\mathcal{X}$. Fix such a $T=dS$, a regular value $p\in C$ and a hermitian metric $\omega$ on $\mathcal{X}$. Take a family of smooth positive measures $\omega_{\varepsilon}$ on $C$ in the same cohomology class as the Dirac measure $\delta_{p}$ such that
$\omega_{\varepsilon}\rightarrow  \delta_{p},$
i.e., $\omega_{\varepsilon}$ is a regularization of $\delta_{p}$. \\

We will reduce our current to regular fibres by push-forward map and the fibres' SG property. However, in order to get $(1,1)$-current on fibres, we need to push forward current of bidegree $(2,2)$. Then we set
$$\widetilde{T}_{\varepsilon}:=\pi^{*}\omega_{\varepsilon}\wedge T, \widetilde{S}_{\varepsilon}:=\pi^{*}\omega_{\varepsilon}\wedge S.$$
We have $\widetilde{T}_{\varepsilon}=d\widetilde{S}_{\varepsilon}$
and $\widetilde{T}_{\varepsilon}$ is a positive $(2,2)$-current on $\mathcal{X}$. Intuitively, as $\omega_{\varepsilon}\rightarrow  \delta_{p},$ if the limit of $\{\widetilde{T}_{\varepsilon}\}$ vanishes, we will have the vanishing of $T$ on generic fibres. Thus we need first normalise the family $\{\widetilde{T}_{\varepsilon}\}$ to have uniform finite mass.

Set
$${T}_{\varepsilon}=\frac{1}{m_{\varepsilon}}\widetilde{T}_{\varepsilon}, {S}_{\varepsilon}=\frac{1}{m_{\varepsilon}}\widetilde{S}_{\varepsilon}$$
where $m_{\varepsilon}=$max$\{1, ||\widetilde{T}_{\varepsilon}||_{mass}\}$ with $||\widetilde{T}_{\varepsilon}||_{mass}=\int_{\mathcal{X}}\widetilde{T}_{\varepsilon}\wedge \omega^{n}.$ Then $||{T}_{\varepsilon}||_{mass}\leq 1$, moreover, the supports of $\{{T}_{\varepsilon}\}$ is contained in a uniform compact subset since $\omega_{\varepsilon}\rightarrow \delta_{p}$. Therefore, by the compactness of positive currents, there exist convergent subsequences in $\{{T}_{\varepsilon}\}$. Fix a convergent subsequence which we denote it also by ${T}_{\varepsilon}$ for convenience, and assume ${T}_{\varepsilon}\rightarrow
T_{\infty}$.\\

Then $T_{\infty}$ is a $d$-closed positive $(2,2)$-current on $\mathcal{X}$, and the support of $T_{\infty}$ is contained in $F_{p}$. Moreover, by the definition of $\widetilde{T}_{\varepsilon}$, $T_{\infty}$ is tangent to $F_{p}$, i.e., in the local coordinates, there is no terms having $dt$ or $d\bar t$ in $T_{\infty}$.\\

We claim $T_{\infty}=0$. As $p\in C$ is a regular value, there exists a small neighborhood $U$ of $p$ and a $C^{\infty}$-trivialization
$$\Phi_{U}:\pi^{-1}(U)\rightarrow U\times F_{p} $$
such that $\varrho:=Pr\circ \Phi_{U}$ is the identity map when restricting to $F_{p}$, where $Pr$ is the projection from $U\times F_{p}$ to $F_{p}$. Applying the push-forward operator $\varrho_{*}$ to $T_{\varepsilon}=dS_{\varepsilon}$, we get
$$\varrho_{*}T_{\varepsilon}=d\varrho_{*}S_{\varepsilon} ,$$
where $\varrho_{*}T_{\varepsilon}$ is current of degree two on $F_{p}$. Let $\varepsilon\rightarrow 0$, and by using the properties of $\varrho_{*}$ and $T_{\infty}$, we know $\varrho_{*}T_{\infty}$ is a $d$-closed positive $(1,1)$-current on $F_{p}$. Moreover, the closedness of Im$d$ implies $T_{\varepsilon}\in$Im$d$. Therefore, $$\varrho_{*}T_{\infty}=d\theta$$
for some current $\theta$ on $F_{p}$. Now since the fibre $F_{p}$ is a SG manifold, $\varrho_{*}T_{\infty}=0$ on $F_{p}$. As $T_{\infty}$ is tangent to $F_{p}$ and its support is contained in $F_{p}$, we finally get
\begin{align*}
T_{\infty}=0\ \text{on}\ \mathcal{X}.
\end{align*}
\\

Since the convergent subsequence is arbitrary, we know the whole $T_{\varepsilon}$ converges and $\lim_{\varepsilon}T_{\varepsilon}=0$.
From this we conclude $\{||\widetilde{T}_{\varepsilon}||_{mass}\}$ are uniformly finite, otherwise, by the definition of ${T}_{\varepsilon}$, there exists subsequence $\varepsilon_{k}$ such that
$$ {T}_{\varepsilon_{k}}=\frac{\widetilde{T}_{\varepsilon_{k}}}{||\widetilde{T}_{\varepsilon_{k}}||_{mass}}.$$
Thus $\lim_{k}||{T}_{\varepsilon_{k}} ||_{mass}=1$, which is a contraction. Uniformly finite of $\{||\widetilde{T}_{\varepsilon}||_{mass}\}$ implies $\{m_{\varepsilon}\}$ is uniformly finite, therefore, we have
$$\pi^{*}\omega_{\varepsilon}\wedge T=m_{\varepsilon}{T}_{\varepsilon}\rightarrow 0. $$

Denote the set of regular value of $\pi$ by $C_{r}$, then its complement $C\backslash C_{r}$ is a discrete subset of $C$. Fix a K$\ddot{a}$hler metric $\omega$ of $C$, varying $p\in C_{r}$, we get
$$T\wedge\pi^{*}\omega=0 $$
on $\mathcal{X}_{r}:=\pi^{-1}(C_{r})$. This implies almost all the slicings of $T$ are tangent to the fibres, so we have $T=g\pi^{*}\omega$ on  $\mathcal{X}_{r}$ for some positive zero-order distribution $g$ on  $\mathcal{X}_{r}$. And $dT=dg\wedge \pi^{*}\omega=0$ yields $g$ is constant when restricted to regular fibres, thus, $g=\pi^{*}f$ for some function $f$ on $C_{r}$. \\

Since $T$ is globally defined and has finite mass , we get the mass of $f\omega$ is finite on  $C_{r}$. Along with $d(f\omega)=0$, we know the trivial extension of $f\omega$ is a well defined $d$-closed $(1,1)$-current on the whole curve $C$, which we denote it by $\mu$. The pull-back $\pi^{*}\mu$ is a positive current and $[\pi^{*}\mu]=c[\pi^{*}\delta_{p}]=c[F_{p}]$ for regular value $p\in C_{r}$ and $c=\mu(C)\geq 0$. It's obvious $T-\pi^{*}\mu$ is positive and supported on $\pi^{-1}(C\backslash C_{r})$. The support theorem then implies
$$ T-\pi^{*}\mu=\sum_{i}c_{i}[E_{i}]$$
where $c_{i}$ are nonnegative constants and $E_{i}$ are irreducible components of $\pi^{-1}(C\backslash C_{r})$.
Combining $[\pi^{*}\mu]=c[\pi^{*}\delta_{p}]=c[F_{p}]$, we get
$$T-c[F_{p}]-d\eta= \sum_{i}c_{i}[E_{i}]$$
for some current $\eta$ on $\mathcal{X}$. Recall that $T=dS$, we finally get
$$c[F_{p}]+\sum_{i}c_{i}[E_{i}]=d(\eta+S)  $$
which is a boundary, and therefore, the topologically essentiality of $\pi$ implies $c=0,c_{i}=0$.

Since $c=\mu(C)$, we get $\mu=0$, thus
$T= \pi^{*}\mu+\sum_{i}c_{i}[E_{i}]=0.$
This finishes our proof and $\mathcal{X}$ is a strongly Gauduchon manifold.

\end{proof}
As $\partial\bar\partial$-lemma implies the existence of strongly Gauduchon metrics, we have the following result about the moduli of compact complex manifolds satisfying $\partial\bar\partial$-lemma.
\begin{corollary}
Let $\pi: \mathcal{X}\rightarrow C$ be a topologically essential holomorphic map from a compact complex manifold $\mathcal{X}$ onto a compact complex curve $C$, if all the regular fibres satisfy $\partial\bar\partial$-lemma, then $\mathcal{X}$ is a strongly Gauduchon manifolds.
\end{corollary}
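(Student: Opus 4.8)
The plan is to reduce the corollary directly to the preceding moduli theorem by checking that the hypothesis ``the regular fibres satisfy $\partial\bar\partial$-lemma'' is stronger than the hypothesis ``the regular fibres are strongly Gauduchon manifolds''. This implication was already recorded in the introduction after Definition~1.1, but I would spell it out: given any compact complex manifold $F$ satisfying the $\partial\bar\partial$-lemma, Gauduchon's theorem \cite{Gau77} provides a Gauduchon metric $\omega$ on $F$, so $\partial\bar\partial\omega^{n-1}=0$; combined with the trivial identity $\partial\partial\omega^{n-1}=0$ this gives $d(\partial\omega^{n-1})=0$, and since $\partial\omega^{n-1}$ is $\partial$-exact, the $\partial\bar\partial$-lemma forces $\partial\omega^{n-1}=\bar\partial\beta$ for some smooth form $\beta$. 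Hence $\omega$ is a strongly Gauduchon metric on $F$.

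Applying this observation to each regular fibre $F_p$ of $\pi$, we conclude that every regular fibre of $\pi$ is a strongly Gauduchon manifold. Since $\pi\colon\mathcal{X}\to C$ is topologically essential by hypothesis, the previous theorem now applies verbatim and yields that $\mathcal{X}$ is a strongly Gauduchon manifold, which is the desired conclusion.

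I do not expect any real obstacle here: the entire content of the corollary is the elementary remark that the $\partial\bar\partial$-lemma implies the existence of a strongly Gauduchon metric, and the remaining work is a direct invocation of the moduli theorem. The only point worth a line of care is verifying that $\partial\omega^{n-1}$ is both $d$-closed and $\partial$-exact so that the $\partial\bar\partial$-lemma is genuinely applicable, which is immediate from the Gauduchon condition.
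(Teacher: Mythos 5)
Your proposal is correct and is exactly the paper's (implicit) argument: the paper derives the corollary by the same observation, already made after Definition~1.1, that a Gauduchon metric $\omega$ on a fibre satisfying the $\partial\bar\partial$-lemma has $d(\partial\omega^{n-1})=0$ and hence $\partial\omega^{n-1}=\bar\partial\beta$, so the regular fibres are strongly Gauduchon and the moduli theorem applies directly. No gaps.
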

The above corollary is optimal in the sense that there exist compact complex manifolds $\mathcal{X}$ and a topologically essential holomorphic map $$\pi: \mathcal{X}\rightarrow C$$ onto a compact curve $C$, such all the fibres satisfy $\partial\bar\partial$-lemma, but $\mathcal{X}$ does not satisfy $\partial\bar\partial$-lemma. Indeed, the $3$-dimensional Iwasawa manifold $I_{3}$ is a desired example. The Iwasawa manifold $I_{3}$ is defined as the quotient $\Gamma\backslash H$, where
\begin{gather*}
H=\left \{\begin{pmatrix}
1 & z_{1} & z_{3}\\
0 &  1 &  z_{2}\\
0 & 0 & 1
\end{pmatrix}|z_{i}\in \mathbb{C}\right \}
\end{gather*}
is the complex Heisenberg group and $\Gamma$ is the lattice defined by taking $z_{1},z_{2}, z_{3}$ to be
Gaussian integers $\mathbb{Z}[i]$, acting by left multiplication. However, the map $\pi: I_{3}\rightarrow \mathbb{C}/ \mathbb{Z}[i]$ induced by
\begin{gather*}
\begin{pmatrix}
1 & z_{1} & z_{3}\\
0 &  1 &  z_{2}\\
0 & 0 & 1
\end{pmatrix}\mapsto z_{1}
\end{gather*}
is topologically essential whose fibres are all K$\ddot{a}$hler, thus satisfying $\partial\bar\partial$-lemma.
It's well known that $I_{3}$ does't satisfy $\partial\bar\partial$-lemma.\\
\\
\\
\textbf{Acknowledgements}
I would like to thank my supervisor Prof. Jixiang Fu for his constant encouragement and Wei Xia for many helpful discussions.

\textsc{Institute of Mathematics, Fudan University, Shanghai 200433, China} \\
\textsc{Jian Xiao} \\
\verb"Email: 10110180005@fudan.edu.cn\ xiao_math@yahoo.com"

\end{document}